\documentclass{lmcs}

\usepackage{amssymb}
\usepackage{array}
\usepackage{booktabs}
\usepackage{hyperref}
\usepackage[nameinlink,noabbrev]{cleveref}
\crefname{thm}{theorem}{theorems}
\Crefname{thm}{Theorem}{Theorems}
\crefname{lem}{lemma}{lemmas}
\Crefname{lem}{Lemma}{Lemmas}
\crefname{prop}{proposition}{propositions}
\Crefname{prop}{Proposition}{Propositions}
\crefname{cor}{corollary}{corollaries}
\Crefname{cor}{Corollary}{Corollaries}
\crefname{defi}{definition}{definitions}
\Crefname{defi}{Definition}{Definitions}
\crefname{exa}{example}{examples}
\Crefname{exa}{Example}{Examples}
\crefname{rem}{remark}{remarks}
\Crefname{rem}{Remark}{Remarks}

\newcommand{\FL}{\mathsf{FL}}
\newcommand{\EPFL}{\exists^{+}\!\mathsf{FL}}
\newcommand{\EFL}{\exists\mathsf{FL}}
\newcommand{\qr}{\operatorname{qr}}
\newcommand{\At}{\operatorname{At}}
\newcommand{\at}{\operatorname{at}}
\newcommand{\tp}{\operatorname{tp}}
\newcommand{\dom}{\operatorname{dom}}
\newcommand{\true}{\top}
\newcommand{\false}{\bot}
\newcommand{\A}{\mathfrak{A}}
\newcommand{\B}{\mathfrak{B}}
\newcommand{\C}{\mathfrak{C}}
\newcommand{\D}{\mathfrak{D}}
\newcommand{\Zt}{\mathsf{Z}}
\newcommand{\Ot}{\mathsf{O}}
\newcommand{\Mt}{\mathsf{M}}
\newcommand{\Types}{\{\Zt,\Ot,\Mt\}}
\newcommand{\prof}{\operatorname{prof}}
\newcolumntype{L}[1]{>{\raggedright\arraybackslash}p{#1}}

\keywords{Fluted fragment, homomorphism preservation, \L o\'s--Tarski preservation, Beth definability, finite model theory}

\begin{document}

\title[Preservation and definability for the fluted fragment]
{Preservation and definability for the fluted fragment}
\author[Y. Ding]{Yiwen Ding\lmcsorcid{0009-0000-6447-6479}}
\address{School of Business and Economics\\
Vrije Universiteit Amsterdam\\
De Boelelaan 1105\\
1081 HV Amsterdam, The Netherlands}
\email{dyiwen666@gmail.com}

\begin{abstract}
This paper investigates preservation and definability for the equality-free
fluted fragment over finite relational signatures.  We first prove an
equirank homomorphism preservation theorem: every homomorphism-preserved
fluted sentence has an existential-positive fluted equivalent of no greater
quantifier rank.  We then refute Purdy's claimed
\L o\'s--Tarski preservation theorem for the fluted fragment.  Our counterexample
is a rank-three sentence over one binary relation that is preserved under
extensions but has no existential fluted equivalent.  Finally, a two-variable
counterexample over a unary base signature shows that weak and ordinary Beth
definability both fail.  All three results hold over all structures and over
finite structures.
\end{abstract}
\maketitle

\section{Introduction}\label{sec:introduction}

The fluted fragment $\FL$ is a syntactically restricted fragment of
first-order logic in which variables occur in a fixed order and the variables
appearing in an atom form a suffix of the current variable sequence.  For
example,
\[
  \forall x_1\forall x_2
  \bigl(R(x_1,x_2)\to
        \exists x_3(S(x_2,x_3)\wedge P(x_3))\bigr)
\]
is fluted: the binary atom $S(x_2,x_3)$ uses the final two variables, while
the unary atom $P(x_3)$ uses the final one.  The underlying variable discipline
originates in work of Quine~\cite{Quine1969,Quine1975}.  Purdy isolated the
fluted fragment and proved its finite model property and
decidability~\cite{Purdy1996}; he subsequently claimed an exponential-sized model property
and NExpTime-completeness of satisfiability~\cite{Purdy2002}.  However,
Pratt-Hartmann,
Szwast, and Tendera refuted both claims, showing that satisfiability is
non-elementary and obtaining bounds for the finite-variable
subfragments~\cite{PrattHartmannSzwastTendera2019}.  In the standard hierarchy of
trans-elementary complexity classes, these bounds make satisfiability for the
full fragment Tower-complete~\cite{BednarczykKojelisPrattHartmann2025}.  For a
systematic account of this development, we refer to~\cite{PrattHartmann2023}.

The fixed variable order also makes $\FL$ a natural setting for model
comparison.  A closely related syntax appeared as the forward
fragment~\cite{Bednarczyk2021}.  Bednarczyk and Jaakkola showed that, at the level of
sentences, the fluted and forward fragments have the same expressive power;
they also developed bisimulations for these and other ordered logics, obtained
corresponding expressive characterisations, and showed that both fragments
lack Craig interpolation~\cite{BednarczykJaakkola2022}.  Ten Cate and Comer
then proved broad undecidability results for extensions of the fluted fragment
with Craig interpolation~\cite{TenCateComer2025}.  Recent work by Yin further
develops this model-theoretic programme: the ordered fragment has uniform
interpolation and enjoys \L o\'s--Tarski preservation~\cite{YinOrdered2026},
while guarded fluted and forward fragments admit modal
translations and also enjoy \L o\'s--Tarski
preservation~\cite{YinGuarded2026}.  Weak interpolation can be recovered for
the fluted fragment~\cite{YinSavingCraig2026}, while satisfiability for its
uniform subfragment is NExpTime-complete~\cite{YinUniformFluted2026}.

The present paper studies three classical model-theoretic phenomena inside
$\FL$.  We work with the equality-free fluted fragment over finite relational
signatures.  The first is homomorphism preservation.  The classical
homomorphism preservation theorem characterises first-order sentences
preserved under homomorphisms as those equivalent to existential-positive
sentences.  Gr\"adel and Rosen proved the corresponding result for
two-variable logic~\cite{GradelRosen1999}, and Rossman showed that the
homomorphism preservation theorem remains valid when restricted to finite
structures~\cite{Rossman2008}; he subsequently established its equirank form
over finite structures~\cite{Rossman2025}.  Unfortunately, these general
first-order results
do not ensure that the existential-positive equivalent respects the fluted
syntax.

The second phenomenon is extension preservation.  The classical
\L o\'s--Tarski preservation theorem characterises first-order sentences
preserved under extensions as those equivalent to existential
sentences~\cite{Los1955,Tarski1954}.  Purdy claimed that $\FL$ also has
\L o\'s--Tarski preservation~\cite{Purdy2002}.  However, Bednarczyk and
Jaakkola later observed that the proposed argument was too sketchy to verify
and treated the question as open~\cite{BednarczykJaakkola2022}.

The third phenomenon is Beth definability.  Following Andr\'eka and N\'emeti,
an implicit definition is \emph{strong} if every base structure has exactly
one expansion and \emph{weak} if no base structure has more than one.  The weak
Beth property asks for explicit definitions of strong implicit definitions,
whereas the ordinary Beth property covers weak implicit definitions as
well~\cite{AndrekaNemeti2021}.  These properties can be separated in
restricted-variable logics: two-variable logic has weak but not ordinary Beth
definability.  Hoogland and Marx showed that, for guarded
fragments, a suitable weakening of Craig interpolation suffices for Beth
definability~\cite{HooglandMarx2002}.  Motivated by this result, Yin established
a weak interpolation theorem for $\FL$ and left the Beth definability of $\FL$
as an open problem~\cite{YinSavingCraig2026}.

Our contributions in this paper are threefold.  First, we prove that every
homomorphism-preserved fluted sentence of quantifier rank $n$ has an
existential-positive fluted equivalent of quantifier rank at most $n$.
Second, we refute Purdy's claim with a fluted sentence of
quantifier rank three over a single binary relation symbol that is preserved
under extensions but has no existential fluted equivalent.  Third, we give a
two-variable fluted sentence that uniquely determines a binary relation over
the unary base signature $\{P\}$, although no formula in
$\FL[2](\{P\})$ defines it.  All three results persist over finite
structures; in particular, both weak and ordinary Beth definability fail
there as well.

The three proofs expose two complementary effects of the suffix discipline.
Rank-bounded positive characteristic formulas unfold into finite forests, which makes
homomorphism preservation unusually rigid.  In contrast, existential fluted
formulas cannot recover the global row labels used in the \L o\'s--Tarski
counterexample, while unary formulas at level two cannot inspect their first
coordinate.  These limitations are captured respectively by rank-bounded
fluted simulations and rank-bounded fluted bisimulations.

For orientation, \cref{tab:fluted-landscape} gives a selected overview of the
established results most relevant here, together with the contributions of
this paper.  Unless finite structures are mentioned explicitly, the
interpolation and preservation entries use ordinary semantics over all
structures.

\begin{table}[ht]
\centering
\small
\renewcommand{\arraystretch}{1.12}
\begin{tabular}{@{}L{0.24\textwidth}L{0.48\textwidth}L{0.21\textwidth}@{}}
\toprule
\textbf{Topic} & \textbf{Result} & \textbf{Source}\\
\midrule
Satisfiability of $\FL$
  & Finite model property; satisfiability and finite satisfiability coincide
    and are Tower-complete
  & \cite{Purdy1996,PrattHartmannSzwastTendera2019,BednarczykKojelisPrattHartmann2025}\\
$m$-variable subfragments
  & In $(m-2)$-NExpTime for $m\geq3$ and
    $\lfloor m/2\rfloor$-NExpTime-hard for $m\geq2$; exact for
    $2\leq m\leq4$
  & \cite{PrattHartmannSzwastTendera2019}\\
Sentence-level expressivity
  & $\FL$ and the forward fragment have the same expressive power; both admit
    bisimulation characterisations
  & \cite{BednarczykJaakkola2022}\\
Craig interpolation
  & Fails for $\FL$ and for the forward fragment
  & \cite{BednarczykJaakkola2022}\\
Weakened interpolation
  & Yin's signature-relaxed form holds without increasing quantifier rank,
    including at every finite-variable level
  & \cite{YinSavingCraig2026}\\
Ordered fragment
  & Uniform interpolation and the \L o\'s--Tarski theorem hold
  & \cite{YinOrdered2026}\\
Guarded fluted and forward fragments
  & Satisfiability is ExpTime-complete; the \L o\'s--Tarski theorem holds
  & \cite{Bednarczyk2021,YinGuarded2026}\\
Uniform fluted fragment
  & Exponential model property; satisfiability is NExpTime-complete; sentence-level
    expressive equivalence with the uniform forward fragment
  & \cite{YinUniformFluted2026}\\
Homomorphism preservation
  & Holds equirank, over all structures and over finite structures
  & \Cref{thm:hpt-main,thm:finite-hpt}\\
\L o\'s--Tarski preservation
  & Fails over all structures and over finite structures
  & \Cref{thm:los-tarski-failure} and \Cref{cor:finite-los-tarski}\\
Weak and ordinary Beth definability
  & Both fail over all structures and over finite structures
  & \Cref{cor:wbdp-finite}\\
\bottomrule
\end{tabular}
\caption{A selected overview of results for the fluted fragment and closely related restrictions.}
\label{tab:fluted-landscape}
\end{table}

\noindent\textbf{Structure of the paper.}
\Cref{sec:unified-prelim} fixes the syntax, semantic conventions, and
model-comparison notions used throughout.  \Cref{sec:hpt} proves the
homomorphism preservation theorem by constructing rank-bounded type codes,
unfolding their characteristic formulas into canonical forests, and
applying a disjoint-copy reset argument.  \Cref{sec:los-tarski} constructs the
counterexample to \L o\'s--Tarski preservation in $\FL$; extension preservation
follows from a three-valued row analysis, and non-definability follows from
rank-bounded fluted simulations.  \Cref{sec:wbdp} gives a strong implicit definition
and uses bisimulation invariance to refute weak, and therefore ordinary, Beth
definability.

\section{Preliminaries}\label{sec:unified-prelim}

We fix the syntax and semantic conventions for the fluted
fragment~\cite{PrattHartmann2023}.  We then introduce a rank-bounded variant
of the fluted bisimulation of Bednarczyk and
Jaakkola~\cite{BednarczykJaakkola2022}, together with its one-way simulation
analogue.

\subsection{The fluted fragment}\label{sec:fluted-fragment}

Throughout, $\sigma$ is a finite signature consisting of relation symbols of
positive arity, and all $\sigma$-structures have non-empty domains.  Equality,
function symbols, and constants are not allowed.  We use the Roman capital
corresponding to a Fraktur structure for its domain; for example,
$A=\dom(\A)$.  If $\A,\B$ are
structures over the same signature, write $\A\subseteq\B$ when $A\subseteq B$
and $R^{\A}=R^{\B}\cap A^r$ for every $r$-ary relation symbol $R$.  Thus
$\A$ is the induced substructure of $\B$ on $A$, and $\B$ is an
\emph{extension} of $\A$.

Unless explicitly restricted to finite structures, preservation and
equivalence claims range over all $\sigma$-structures, finite or infinite.

Fix the ordered variables $x_1,x_2,\ldots$.  For $m\geq0$, the level
$\FL[m](\sigma)$ is defined by simultaneous induction as follows.
\begin{itemize}
  \item The truth constants $\true,\false$ belong to $\FL[m](\sigma)$.
  If $R\in\sigma$ has arity $r\leq m$, then the suffix atom
  $R(x_{m-r+1},\ldots,x_m)$ belongs to $\FL[m](\sigma)$.
  \item If $\varphi,\psi\in\FL[m](\sigma)$, then
  $\neg\varphi$, $\varphi\wedge\psi$, and $\varphi\vee\psi$ belong to
  $\FL[m](\sigma)$.
  \item If $\varphi\in\FL[m+1](\sigma)$, then
  $\exists x_{m+1}\varphi$ and $\forall x_{m+1}\varphi$ belong to
  $\FL[m](\sigma)$.
\end{itemize}
The fluted fragment over $\sigma$ is
$\FL(\sigma)=\bigcup_{m\geq0}\FL[m](\sigma)$, and $\FL[0](\sigma)$ consists
of sentences.  For some $0\leq\ell\leq m$, the free variables of a formula in
$\FL[m](\sigma)$ form the suffix $x_{m-\ell+1},\ldots,x_m$, understood to be
empty when $\ell=0$.  Thus, $m$ denotes the syntactic level and need not be the
number of free variables.  If
$\bar a=(a_1,\ldots,a_m)\in A^m$ and $\varphi\in\FL[m](\sigma)$, write
$\A\models\varphi[\bar a]$ if $\A,v\models\varphi$, where $v$ is any
assignment satisfying $v(x_i)=a_i$ for $m-\ell+1\leq i\leq m$ and
$\mathsf{free}(\varphi)=\{x_{m-\ell+1},\ldots,x_m\}$.  Thus, $\varphi$ is
evaluated at the suffix $(a_{m-\ell+1},\ldots,a_m)$ of $\bar a$; the entries
$a_1,\ldots,a_{m-\ell}$ are ignored.  The quantifier rank is defined recursively
by $\qr(\true)=\qr(\false)=\qr(\alpha)=0$ for every atom $\alpha$,
$\qr(\neg\varphi)=\qr(\varphi)$,
$\qr(\varphi\wedge\psi)=\qr(\varphi\vee\psi)
=\max\{\qr(\varphi),\qr(\psi)\}$, and
$\qr(Qx_{m+1}\varphi)=\qr(\varphi)+1$ for $Q\in\{\exists,\forall\}$.

The existential-positive levels $\EPFL[m](\sigma)$ are generated from the
permitted atoms and truth constants by conjunction, disjunction, and
existential quantification according to the fluted formation rule.  The
existential levels $\EFL[m](\sigma)$ are
generated in negation normal form from atoms, negated atoms, truth constants,
conjunction, disjunction, and existential quantification according to the
fluted formation rule.  Thus
$\EFL[m](\sigma)$ contains no universal quantifiers.  Put
$\EPFL(\sigma)=\bigcup_{m\geq0}\EPFL[m](\sigma)$ and
$\EFL(\sigma)=\bigcup_{m\geq0}\EFL[m](\sigma)$.  Their sentence levels are
$\EPFL[0](\sigma)$ and $\EFL[0](\sigma)$, respectively.

A map $h:A\to B$ is a \emph{homomorphism} $h:\A\to\B$ if, for every
$r$-ary relation symbol $R$ and all $a_1,\ldots,a_r\in A$,
\[
  R^{\A}(a_1,\ldots,a_r)
  \quad\Longrightarrow\quad
  R^{\B}(h(a_1),\ldots,h(a_r)).
\]
For a non-empty family $(\A_i)_{i\in I}$ of structures, their
\emph{disjoint union}
$\biguplus_{i\in I}\A_i$ has domain
$\{(i,a)\mid i\in I,\ a\in A_i\}$, with relations interpreted componentwise.
Thus no relation tuple contains elements from distinct components.  We freely
identify each component with its canonical image in the disjoint union.

\subsection{Rank-bounded fluted bisimulation and simulation}\label{subsec:model-comparison}

We use a rank-bounded variant of the ordered-logic bisimulation introduced by
Bednarczyk and
Jaakkola~\cite[Definition~2 and Lemma~3]{BednarczykJaakkola2022}, in which related
tuples are extended by appending one element at each move.  We also use the
corresponding simulation notion for existential fluted formulas.

For a tuple $\bar a=(a_1,\ldots,a_m)$ and an element $a'$, write
$\bar a a':=(a_1,\ldots,a_m,a')$.  As usual, $A^0=\{\epsilon\}$, where
$\epsilon$ is the empty tuple.

\begin{defi}[Rank-$k$ fluted bisimulation]\label{def:fluted-bisimulation}
Let $\A,\B$ be $\sigma$-structures and $k\geq0$.  A \emph{rank-$k$ fluted
bisimulation} between $\A$ and $\B$ is a sequence
$\mathcal Z=(Z_0,\ldots,Z_k)$, where
\[
  Z_j\subseteq\bigcup_{m<\omega}(A^m\times B^m)
  \qquad (0\leq j\leq k),
\]
and $Z_k$ is non-empty, satisfying the following conditions:
\begin{itemize}
  \item \emph{Atomic harmony.}  For every $0\leq j\leq k$, every
  $(\bar a,\bar b)\in Z_j$ of length $m$, and every $r$-ary $R\in\sigma$ with
  $r\leq m$,
  \[
    \A\models R(a_{m-r+1},\ldots,a_m)
    \quad\Longleftrightarrow\quad
    \B\models R(b_{m-r+1},\ldots,b_m).
  \]
  \item \emph{Forth condition.}  For every $0\leq j<k$, every
  $(\bar a,\bar b)\in Z_{j+1}$, and every $a'\in A$, there is $b'\in B$ such
  that $(\bar a a',\bar b b')\in Z_j$.
  \item \emph{Back condition.}  For every $0\leq j<k$, every
  $(\bar a,\bar b)\in Z_{j+1}$, and every $b'\in B$, there is $a'\in A$ such
  that $(\bar a a',\bar b b')\in Z_j$.
\end{itemize}
We write $(\A,\bar a)\sim_k^{\FL}(\B,\bar b)$ if some rank-$k$ fluted
bisimulation satisfies $(\bar a,\bar b)\in Z_k$.
\end{defi}

The index $j$ records the remaining quantifier rank: atomic harmony is checked
for every related tuple pair, while each application of the forth or back
condition decreases $j$ by one.  Write
$\A\equiv_k^{\FL}\B$ if $\A,\B$ satisfy the same fluted sentences of
quantifier rank at most $k$.

\begin{prop}[Bisimulation invariance]
\label{prop:bisimulation-invariance}
Let $\A,\B$ be $\sigma$-structures, let $\bar a\in A^m$ and
$\bar b\in B^m$, and let $k\geq0$.  If
$(\A,\bar a)\sim_k^{\FL}(\B,\bar b)$, then
\[
  \A\models\theta[\bar a]
  \quad\Longleftrightarrow\quad
  \B\models\theta[\bar b]
\]
for every $\theta\in\FL[m](\sigma)$ of quantifier rank at most $k$.  In
particular,
$(\A,\epsilon)\sim_k^{\FL}(\B,\epsilon)$ implies
$\A\equiv_k^{\FL}\B$.
\end{prop}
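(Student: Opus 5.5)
We argue by induction on the structure of $\theta$, proving a statement strong enough to survive the quantifier step. The strengthened claim is as follows. Let $\mathcal Z=(Z_0,\ldots,Z_k)$ be a rank-$k$ fluted bisimulation, let $0\leq j\leq k$, and let $(\bar a,\bar b)\in Z_j$ have length $m$. Then
\[
  \A\models\theta[\bar a]
  \quad\Longleftrightarrow\quad
  \B\models\theta[\bar b]
\]
for every $\theta\in\FL[m](\sigma)$ with $\qr(\theta)\leq j$. The proposition is the instance $j=k$. The sentence clause then follows with $m=0$ and $\bar a=\bar b=\epsilon$, because a rank-$\leq k$ sentence lies in $\FL[0](\sigma)$ and $\equiv_k^{\FL}$ is defined through exactly these sentences.

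The induction runs on the construction of $\theta$, quantified universally over $j$, over $m$, and over pairs in $Z_j$.

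\textbf{Base cases.} For $\true$ and $\false$ there is nothing to show. A suffix atom $R(x_{m-r+1},\ldots,x_m)$ with $r\leq m$ is evaluated at $(a_{m-r+1},\ldots,a_m)$ and at $(b_{m-r+1},\ldots,b_m)$. Atomic harmony for $Z_j$ gives exactly the required equivalence.

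\textbf{Boolean cases.} Negation, conjunction and disjunction keep the level $m$ and do not raise the rank. They follow directly from the induction hypothesis applied to the same pair $(\bar a,\bar b)\in Z_j$.

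\textbf{Quantifier step.} Let $\theta=\exists x_{m+1}\varphi$ with $\varphi\in\FL[m+1](\sigma)$. Then $\qr(\varphi)=\qr(\theta)-1\leq j-1$, so $j\geq1$. Suppose $\A\models\theta[\bar a]$, witnessed by some $a'\in A$ with $\A\models\varphi[\bar aa']$. The forth condition, applied to $(\bar a,\bar b)\in Z_j=Z_{(j-1)+1}$, yields $b'\in B$ with $(\bar aa',\bar bb')\in Z_{j-1}$. The induction hypothesis for $\varphi$ at index $j-1$ gives $\B\models\varphi[\bar bb']$, hence $\B\models\theta[\bar b]$. The converse direction is symmetric and uses the back condition. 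The universal case $\forall x_{m+1}\varphi$ is dual: we either treat it as $\neg\exists x_{m+1}\neg\varphi$, or repeat the argument with the roles of forth and back exchanged.

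\textbf{The one point requiring care.} Formulas in $\FL[m](\sigma)$ may have fewer than $m$ free variables, and they are evaluated on the suffix of $\bar a$. We must check that the semantics $\A\models\varphi[\bar aa']$ matches the notion used in the quantifier step. It does: $\A\models\exists x_{m+1}\varphi[\bar a]$ holds iff some $a'$ satisfies $\A\models\varphi[\bar aa']$, since the free variables of $\varphi$ form a suffix of $x_1,\ldots,x_{m+1}$ and the ignored prefix entries are irrelevant. Once this is noted, everything else is routine. The key design choice is to index by $Z_j$ with $\qr(\theta)\leq j$, rather than working only with $Z_k$, so that the induction hypothesis is available at the lower index after each quantifier.
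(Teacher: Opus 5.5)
Your proof is correct and follows essentially the same argument as the paper: you strengthen the claim to every pair in $Z_j$ and every formula with $\qr(\theta)\leq j$, use atomic harmony for atoms, and use forth/back to step from $Z_j$ to $Z_{j-1}$ at each quantifier. The only difference is bookkeeping. The paper runs an outer induction on $j$ with an inner structural induction, while you run a single structural induction quantified over all $j$, and these are interchangeable here.
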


\begin{proof}
Fix a rank-$k$ fluted bisimulation $(Z_0,\ldots,Z_k)$.  We prove, by induction
on $j$, that every $(\bar c,\bar d)\in Z_j$ of common length $p$ agrees on
all formulas in $\FL[p](\sigma)$ of quantifier rank at most $j$.  At each $j$
we use structural induction on the formula.  Atomic formulas follow from
atomic harmony, truth constants have the same value in both structures, and the Boolean cases
follow from the structural induction hypothesis.

Suppose $j>0$.  If $\theta=\exists x_{p+1}\eta$ and
$\A\models\theta[\bar c]$, choose a witness $c'\in A$.  The forth condition gives $d'\in B$
with $(\bar c c',\bar d d')\in Z_{j-1}$.  Since $\qr(\eta)\leq j-1$, the
induction hypothesis for $j-1$ transfers $\eta$, so $d'$ witnesses
$\B\models\theta[\bar d]$.  The converse follows from the back condition.

If $\theta=\forall x_{p+1}\eta$, assume
$\A\models\theta[\bar c]$ and take any $d'\in B$.  The back condition supplies $c'\in A$
with $(\bar c c',\bar d d')\in Z_{j-1}$.  The induction hypothesis for
$j-1$ gives $\B\models\eta[\bar d d']$.  Since $d'$ was arbitrary,
$\B\models\theta[\bar d]$; the converse follows from the forth condition.
\end{proof}

For existential formulas we use the corresponding notion of fluted simulation.

\begin{defi}[Rank-$k$ fluted simulation]\label{def:fluted-simulation}
Let $\A,\B$ be $\sigma$-structures and $k\geq0$.
A \emph{rank-$k$ fluted simulation} from $\A$ to $\B$ is a sequence
$\mathcal Z=(Z_0,\ldots,Z_k)$ satisfying the definition of a rank-$k$ fluted
bisimulation except for the back condition.  We write
$(\A,\bar a)\preccurlyeq_k^{\FL}(\B,\bar b)$ if some such sequence satisfies
$(\bar a,\bar b)\in Z_k$.
\end{defi}

\begin{prop}[Existential transfer]
\label{prop:existential-transfer}
Let $\A,\B$ be $\sigma$-structures, let $\bar a\in A^m$ and
$\bar b\in B^m$, and let $k\geq0$.  If
$(\A,\bar a)\preccurlyeq_k^{\FL}(\B,\bar b)$, then
\[
  \A\models\psi[\bar a]
  \quad\Longrightarrow\quad
  \B\models\psi[\bar b]
\]
for every $\psi\in\EFL[m](\sigma)$ of quantifier rank at most $k$.
Consequently, if
$(\A,\epsilon)\preccurlyeq_k^{\FL}(\B,\epsilon)$ for every $k\geq0$, then
every existential fluted sentence true in $\A$ is true in $\B$.
\end{prop}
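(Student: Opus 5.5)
We mirror the proof of \cref{prop:bisimulation-invariance}, keeping only the direction that the forth condition supports. Fix a rank-$k$ fluted simulation $(Z_0,\ldots,Z_k)$ from $\A$ to $\B$. By induction on $j\leq k$ we prove that whenever $(\bar c,\bar d)\in Z_j$ has common length $p$, every $\psi\in\EFL[p](\sigma)$ with $\qr(\psi)\leq j$ satisfies $\A\models\psi[\bar c]\Rightarrow\B\models\psi[\bar d]$. Inside each $j$ we proceed by structural induction on $\psi$. The statement then follows by taking $j=k$ and $(\bar c,\bar d)=(\bar a,\bar b)$.

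The base cases are straightforward. Truth constants have the same value in both structures. For a suffix atom $R(x_{p-r+1},\ldots,x_p)$, atomic harmony holds for the pair in $Z_j$ and is a biconditional, so it gives the implication. The biconditional also handles negated atoms, which are the only negations allowed, since $\EFL$ is in negation normal form. This is the one place where using the simulation's two-sided atomic harmony matters: it is what licenses negated atoms and distinguishes $\EFL$ from $\EPFL$. Conjunction and disjunction follow directly from the structural induction hypothesis, because both connectives are monotone.

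For the quantifier step, let $\psi=\exists x_{p+1}\eta$ with $j>0$ and $\A\models\psi[\bar c]$. Pick a witness $c'$ with $\A\models\eta[\bar c c']$. The forth condition, applied to $(\bar c,\bar d)\in Z_j$, yields $d'$ with $(\bar c c',\bar d d')\in Z_{j-1}$. Since $\qr(\eta)\leq j-1$ and $\eta\in\EFL[p+1](\sigma)$, the outer induction hypothesis gives $\B\models\eta[\bar d d']$, and hence $\B\models\psi[\bar d]$. There are no universal quantifiers in $\EFL$, so this is the only quantifier case, and the back condition is never used.

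For the consequence, let $\psi$ be an existential fluted sentence true in $\A$, and set $k=\qr(\psi)$. The hypothesis gives $(\A,\epsilon)\preccurlyeq_k^{\FL}(\B,\epsilon)$, and the main claim at $m=0$ gives $\B\models\psi$. We expect no real obstacle. The only point needing care is that pairs in $Z_j$ may have arbitrary length, so the induction must quantify over all lengths $p$, and a formula at level $p$ is evaluated on suffixes. This is already built into atomic harmony and into the semantics of $\FL[p](\sigma)$.
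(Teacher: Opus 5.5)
Your proposal is correct and follows essentially the same route as the paper: a nested induction on the remaining rank $j$ and on formula structure, with two-sided atomic harmony covering atoms and negated atoms, and the forth condition handling $\exists x_{p+1}\eta$, the only quantifier case. The consequence then follows by taking $k:=\qr(\psi)$ at level $m=0$.
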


\begin{proof}
Fix a witnessing sequence $(Z_0,\ldots,Z_k)$.  As above, we use induction on
$j$, with a structural induction at each $j$, to prove the claim for every
pair $(\bar c,\bar d)\in Z_j$ of common length $p$.  Atomic harmony handles
atoms and negated atoms, truth constants are preserved, and the Boolean
cases follow from the structural induction hypothesis.  The remaining case is
$\psi=\exists x_{p+1}\eta$, which can occur only when $j>0$.  If $\psi$ is
true at $(\A,\bar c)$, choose a witness $c'\in A$.  The forth condition gives $d'\in B$
with $(\bar c c',\bar d d')\in Z_{j-1}$.  Since $\qr(\eta)\leq j-1$, the
induction hypothesis for $j-1$ gives $\B\models\eta[\bar d d']$, and hence
$\B\models\psi[\bar d]$.
\end{proof}
\section{Homomorphism preservation theorem}\label{sec:hpt}
Our proof follows a broad strategy from Rossman's work on homomorphism
preservation~\cite{Rossman2008}, but the technical mechanism is specific to
fluted syntax.  We associate each rank-$n$ root type code with an
existential-positive characteristic sentence and a finite canonical forest
representing its canonical query.  A disjoint-copy reset bisimulation shows
that the forest associated with any model of a homomorphism-preserved sentence
$\varphi$ of rank $n$ also satisfies $\varphi$.  Finiteness of the root type
codes then yields the required finite disjunction.  The construction remains
finite over finite input structures, giving the finite version as well.

\subsection{Rank-bounded type codes and positive characteristic formulas}\label{sec:types}
We encode the rank-bounded information of tuples by finitely many
recursive type codes and translate each type code into an existential-positive
characteristic formula.
\begin{defi}[Atomic type]\label{def:atomic-type}
For $m\geq0$, the atoms available at level $m$ are
\[
  \At_m
  :=
  \bigl\{
     R(x_{m-r+1},\ldots,x_m)\mid
     R\in\sigma,\ \operatorname{ar}(R)=r\leq m
  \bigr\}.
\]
For a $\sigma$-structure $\A$ and a tuple
$\bar a=(a_1,\ldots,a_m)\in A^m$, define the \emph{atomic type} of $\bar a$ in
$\A$,
\[
  \at_{\A}(\bar a)
  :=
  \bigl\{
      \alpha\in\At_m\mid
      \A\models\alpha[\bar a]
  \bigr\}.
\]
\end{defi}

This set records exactly the true atoms at the current level.  Since $\At_m$
is fixed, equality of two such sets records agreement on both true and false
atoms.

\begin{defi}[Rank-bounded type code]\label{def:type-code}
Let $\A$ be a $\sigma$-structure and $\bar a\in A^m$.  For $k\geq0$, define
the \emph{rank-$k$ type code} $\tp_{\A}^k(\bar a)$ of $\bar a$ in $\A$ at
\emph{level $m$} recursively by
\[
  \tp_{\A}^{0}(\bar a)
  :=
  \at_{\A}(\bar a),
\]
and
\begin{equation}
  \tp_{\A}^{k+1}(\bar a)
  :=
  \left(
     \tp_{\A}^{0}(\bar a),
     \bigl\{
       \tp_{\A}^{k}(\bar a b)\mid b\in A
     \bigr\}
  \right).
  \label{eq:type-code}
\end{equation}
The type codes in the second component are called \emph{successor type codes}.
For a type code $t=(p,S)$ of rank at least one, write
$\mathsf{Succ}(t):=S$.
Notice that repeated successor type codes are retained only once: if $b,b'\in A$ satisfy
$\tp_{\A}^{k}(\bar a b)=\tp_{\A}^{k}(\bar a b')$, then this common type code
contributes a single element to the successor set.  Thus the type code records
which successor type codes are realised, but not how many elements realise each
one.
\end{defi}

For fixed $m,k\geq0$, the \emph{rank-$k$ type codes} at \emph{level $m$} are
all values $\tp_{\A}^k(\bar a)$ that arise as $\A$ ranges over the $\sigma$-structures
and $\bar a$ ranges over $A^m$.  Thus only $\sigma$, $m$, and $k$ are fixed;
the structure and tuple may vary.

\begin{lem}[Finiteness of type codes]\label{lem:type-finite}
For every $m,k\geq0$, there are only finitely many rank-$k$ type codes at
level $m$.
\end{lem}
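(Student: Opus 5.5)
We argue by induction on $k$, simultaneously for all levels $m$.  The idea is to exhibit, for each pair $(m,k)$, an explicit finite set $T_{m,k}$ that contains every rank-$k$ type code at level $m$.  Finiteness of the actual codes then follows, since they form a subset of $T_{m,k}$.

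\emph{Base case.}  Put $T_{m,0}:=\mathcal P(\At_m)$, the powerset of the atoms available at level $m$.  Because $\sigma$ is finite and each $R\in\sigma$ contributes at most one suffix atom at level $m$, we have $|\At_m|\leq|\sigma|$.  Hence $T_{m,0}$ is finite, with at most $2^{|\sigma|}$ elements.  By \cref{def:atomic-type}, every value $\tp^0_{\A}(\bar a)=\at_{\A}(\bar a)$ is a subset of $\At_m$, so it lies in $T_{m,0}$.

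\emph{Inductive step.}  Assume $T_{m',k}$ is finite and contains all rank-$k$ codes at level $m'$, for every $m'$.  Define
\[
  T_{m,k+1}:=T_{m,0}\times\mathcal P(T_{m+1,k}).
\]
This is a finite set, since it is a product of a finite set with the powerset of a finite set.  Now take any $\A$ and any $\bar a\in A^m$.  By \eqref{eq:type-code}, $\tp^{k+1}_{\A}(\bar a)$ is a pair.  Its first component $\tp^0_{\A}(\bar a)$ lies in $T_{m,0}$.  Its second component is the set $\{\tp^k_{\A}(\bar a b)\mid b\in A\}$.  Each $\bar a b$ is a tuple of length $m+1$, so each $\tp^k_{\A}(\bar a b)$ is a rank-$k$ code at level $m+1$ and lies in $T_{m+1,k}$ by the induction hypothesis.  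The second component is therefore a subset of $T_{m+1,k}$, and so $\tp^{k+1}_{\A}(\bar a)\in T_{m,k+1}$.

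The key point in the inductive step is the level shift: a rank-$(k+1)$ code at level $m$ refers to rank-$k$ codes at level $m+1$.  This is why the induction hypothesis must be quantified over all levels rather than a fixed one.  It also relies on \cref{def:type-code} recording only the \emph{set} of realised successor codes, with no multiplicities.  This matters because $A$ may be infinite: the set $\{\tp^k_{\A}(\bar a b)\mid b\in A\}$ is still a subset of the finite set $T_{m+1,k}$, whatever the cardinality of $A$.

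Unwinding the recursion gives the explicit bound $|T_{m,k}|\leq$ a tower of exponentials of height about $k$ in $2^{|\sigma|}$.  The argument itself is routine.
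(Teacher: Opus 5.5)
Your proposal is correct and follows the paper's proof: induction on $k$ simultaneously for all levels $m$, with the base case bounded by $\mathcal P(\At_m)$ and the step by $\mathcal P(\At_m)\times\mathcal P(U_{m+1,k})$, using the level shift. The only difference is that you work with explicitly defined finite supersets $T_{m,k}$ instead of the sets of realised codes $U_{m,k}$. That change is purely presentational.
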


\begin{proof}
For $m,k\geq0$, let $U_{m,k}$ be the set of all rank-$k$ type codes at level
$m$.  We prove by induction on $k$, simultaneously for all $m$, that these
sets are finite.  At rank $0$,
\[
   U_{m,0}\subseteq\mathcal P(\At_m).
\]
Since $\sigma$ is finite, $\At_m$ is finite, and hence so is $U_{m,0}$.

For the induction step, suppose that $U_{m,k}$ is finite for every $m$.  By
the definition of the type codes,
\[
   U_{m,k+1}
   \subseteq
   \mathcal P(\At_m)
   \times
   \mathcal P(U_{m+1,k}).
\]
Here $\At_m$ is finite, and $U_{m+1,k}$ is finite by the simultaneous
induction hypothesis applied at level $m+1$.  Hence the set on the right, and
therefore $U_{m,k+1}$, is finite.  This completes the induction.
\end{proof}

\begin{defi}[Characteristic formula]\label{def:characteristic-formula}
For every $m,k\geq0$ and every rank-$k$ type code $t$ at level $m$, define
its \emph{characteristic formula}
$\chi_t^{m,k}\in\EPFL[m](\sigma)$ by induction on $k$, simultaneously for all
levels $m$.  At rank $0$, we have $t\subseteq\At_m$, and we put
\[
  \chi_t^{m,0}:=\bigwedge_{\alpha\in t}\alpha,
\]
where, throughout this definition, empty conjunctions are interpreted as
$\true$.  For the induction step, let $t=(p,S)$ be a rank-$(k+1)$ type code
at level $m$, and put
\begin{equation}
  \chi_t^{m,k+1}
  :=
  \left(\bigwedge_{\alpha\in p}\alpha\right)
  \wedge
  \bigwedge_{s\in S}
     \exists x_{m+1}\chi_s^{m+1,k}.
  \label{eq:characteristic-formula}
\end{equation}
By \cref{lem:type-finite},~\eqref{eq:characteristic-formula} is a finite
conjunction, so the
recursion is well-defined.

In~\eqref{eq:characteristic-formula}, each conjunct of the form
$\exists x_{m+1}\chi_s^{m+1,k}$ has a separate quantifier.  Different
successor type codes may therefore use different witnesses, even though the
same variable symbol $x_{m+1}$ is reused.  All these conjuncts are evaluated
under the same assignment to $x_1,\ldots,x_m$, although only a suffix of these
variables may actually occur freely.

For a particular structure and tuple, abbreviate
\[
  \chi_{\A,\bar a}^{k}
  :=
  \chi_{\tp_{\A}^{k}(\bar a)}^{m,k},
  \qquad |\bar a|=m.
\]
For every $\sigma$-structure $\A$ and every $n\geq0$, define the
\emph{rank-$n$ positive characteristic sentence} associated with the empty
tuple by
\[
   \Theta_{\A}^{n}
   :=
   \chi_{\A,\epsilon}^{n}
   \in\EPFL[0](\sigma).
\]
Notice that, by construction,
$\chi_{\A,\bar a}^{k}\in\EPFL[m](\sigma)$ and
$\qr(\chi_{\A,\bar a}^{k})\leq k$ whenever $|\bar a|=m$.
\end{defi}

We next verify that the characteristic formulas are realised by their defining
tuples and make explicit the rank-bounded forth and back conditions for equal
type codes.

\begin{lem}\label{lem:chi-basic}
For every $m,k\geq0$, every $\sigma$-structure $\A$, and every tuple
$\bar a\in A^m$:
\begin{enumerate}[label=\textup{(\roman*)}]
  \item $\A\models\chi_{\A,\bar a}^{k}[\bar a]$;
  \item for every $\sigma$-structure $\B$ and every $\bar b\in B^m$, if
  $\tp_{\A}^{k+1}(\bar a)=\tp_{\B}^{k+1}(\bar b)$, then the following
  conditions hold:
  \begin{itemize}
    \item \emph{Atomic harmony.}
    $\at_{\A}(\bar a)=\at_{\B}(\bar b)$.
    \item \emph{Forth condition.}  For every $a'\in A$, there exists $b'\in B$ such
    that
    $\tp_{\A}^{k}(\bar a a')=\tp_{\B}^{k}(\bar b b')$.
    \item \emph{Back condition.}  For every $b'\in B$, there exists $a'\in A$ such
    that
    $\tp_{\A}^{k}(\bar a a')=\tp_{\B}^{k}(\bar b b')$.
  \end{itemize}
\end{enumerate}
In particular, for every $\sigma$-structure $\A$ and every $n\geq0$,
$\Theta_{\A}^{n}\in\EPFL[0](\sigma)$,
$\qr(\Theta_{\A}^{n})\leq n$, and $\A\models\Theta_{\A}^{n}$.
\end{lem}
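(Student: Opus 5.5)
Part (i) is proved by induction on $k$, simultaneously for all levels $m$ and all tuples $\bar a\in A^m$.

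For $k=0$, the formula $\chi_{\A,\bar a}^{0}$ is the conjunction of the atoms in $\at_{\A}(\bar a)$. Each of these holds at $\bar a$ by \cref{def:atomic-type}, and the empty conjunction is $\true$.

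For the step, write $\tp_{\A}^{k+1}(\bar a)=(p,S)$ with $p=\at_{\A}(\bar a)$. The atomic conjuncts of \eqref{eq:characteristic-formula} hold for the same reason as at rank $0$. Now take a conjunct $\exists x_{m+1}\chi_s^{m+1,k}$ with $s\in S$. By \eqref{eq:type-code}, $s=\tp_{\A}^{k}(\bar a b)$ for some $b\in A$. Hence $\chi_s^{m+1,k}=\chi_{\A,\bar a b}^{k}$, and the induction hypothesis at level $m+1$ gives $\A\models\chi_{\A,\bar ab}^{k}[\bar ab]$. So $b$ witnesses the existential conjunct at $\bar a$. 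Since the conjuncts are evaluated under the same assignment to $x_1,\ldots,x_m$ and each has its own quantifier, the whole conjunction holds.

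One point needs care here: the semantics $\A\models\varphi[\bar a]$ evaluates only the free suffix. I will note that the witness assignment extends the one for $\bar a$, so the evaluation at $\bar ab$ is exactly the evaluation of the body under that extended assignment. This bookkeeping about levels and suffixes is the only place I expect any friction.

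Part (ii) is read off directly from the definition of the type code. Equality $\tp_{\A}^{k+1}(\bar a)=\tp_{\B}^{k+1}(\bar b)$ means equality of pairs. The first components give $\at_{\A}(\bar a)=\at_{\B}(\bar b)$, which is atomic harmony. The second components give
\[
\{\tp_{\A}^{k}(\bar aa')\mid a'\in A\}=\{\tp_{\B}^{k}(\bar bb')\mid b'\in B\}.
\]
For the forth condition, given $a'\in A$, the code $\tp_{\A}^{k}(\bar aa')$ lies in the right-hand set, so it equals $\tp_{\B}^{k}(\bar bb')$ for some $b'\in B$. The back condition follows symmetrically.

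The final assertion follows from the earlier parts. That $\Theta_{\A}^{n}\in\EPFL[0](\sigma)$ with $\qr(\Theta_{\A}^{n})\le n$ is already noted in \cref{def:characteristic-formula}; if desired, it can be confirmed by a trivial induction, since each step adds exactly one existential quantifier over formulas of rank at most $k$. Finally, $\A\models\Theta_{\A}^{n}$ is part (i) applied with $m=0$, $\bar a=\epsilon$ and $k=n$.
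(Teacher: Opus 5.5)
Your proposal is correct and follows the paper's proof essentially step for step: you prove (i) by induction on $k$ simultaneously over all levels, using that each successor code $s$ is realised as $\tp_{\A}^{k}(\bar a b)$, so that $\chi_s^{m+1,k}=\chi_{\A,\bar a b}^{k}$, and you read (ii) off from equality of the ordered pairs in \eqref{eq:type-code}. The final assertion is likewise obtained, as in the paper, from the case $m=0$, $\bar a=\epsilon$ of (i) together with the syntactic observation at the end of the definition of characteristic formulas.
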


\begin{proof}
We prove (i) by induction on $k$, simultaneously for all levels $m$.  If
$k=0$, the formula
$\chi_{\A,\bar a}^{0}$ is the conjunction of the current atoms true at
$\bar a$, and hence is true at $\bar a$.

Now assume the assertion at rank $k$ and consider rank $k+1$.  Every current
atomic conjunct in~\eqref{eq:characteristic-formula} is true by the definition of
$\at_{\A}(\bar a)$.  Let $s$ belong to the successor set of
$\tp_{\A}^{k+1}(\bar a)$.  By definition, there is some $b\in A$ such that
$s=\tp_{\A}^{k}(\bar a b)$.  Hence
$\chi_s^{m+1,k}=\chi_{\A,\bar a b}^k$, and the induction hypothesis gives
$\A\models\chi_s^{m+1,k}[\bar a b]$.  Thus $b$ witnesses the corresponding
existential conjunct, and every conjunct in
\eqref{eq:characteristic-formula} is true at $\bar a$.

It remains to verify (ii).  Equality of the two ordered pairs in
\eqref{eq:type-code} gives atomic harmony and equality of their sets of successor type
codes.  Given $a'\in A$, the type code
$\tp_{\A}^{k}(\bar a a')$ belongs to the first successor set and hence to
the second; by the definition of that second set, some $b'\in B$ realises it.
This proves the forth condition, and the symmetric argument proves the back
condition.
The satisfaction claim in the final assertion is the case $m=0$,
$\bar a=\epsilon$ of (i), while membership and the quantifier-rank bound
follow from the observation at the end of
\cref{def:characteristic-formula}.
\end{proof}

\begin{rem}\label{rem:positive-characteristic-information}
The formula $\chi_{\A,\bar a}^{k}$ expresses only the positive information
contained in $\tp_{\A}^{k}(\bar a)$.  If a $\sigma$-structure $\B$ with a
tuple $\bar b$ satisfies it, then $\bar b$ satisfies every current atom
recorded at $\bar a$ and provides a witness for the characteristic formula
associated with each recorded successor type code.  The formula does not assert
that any other atom is false, require those witnesses to have exactly the
recorded successor type codes, or exclude additional successor type codes in $\B$.
Consequently,
$\B\models\chi_{\A,\bar a}^{k}[\bar b]$ need not imply
$\tp_{\A}^{k}(\bar a)=\tp_{\B}^{k}(\bar b)$.  We use equality of type
codes to verify the forth and back conditions, whereas satisfaction of $\chi$
is used to construct homomorphisms.
\end{rem}

\subsection{The canonical forest}\label{sec:canonical}

Fix a $\sigma$-structure $\A$ and an integer $n\geq1$.  We unfold
$\Theta_{\A}^{n}$ into a finite labelled forest $T_{\A}^{n}$ and then
interpret a $\sigma$-structure $\C_{\A}^{n}$ on its nodes.  The labelling map
$\lambda:T_{\A}^{n}\longrightarrow A$ records chosen representatives.  The outermost
existential conjuncts give the roots, and nested existential conjuncts give
their descendants.  Each label $\lambda(v)$ is a chosen representative in
$A$, and distinct nodes may have the same label.

Construct $T_{\A}^{n}$ by depth, with depth recording tuple length.  The empty
tuple is the depth-$0$ starting point.  Write $u\prec v$ when $v$ is an
immediate child of $u$.

The roots have depth $1$.  For each $s$ in the set
\[
  \mathsf{Succ}\bigl(\tp_{\A}^{n}(\epsilon)\bigr)
  =\bigl\{\tp_{\A}^{n-1}(a)\mid a\in A\bigr\},
\]
choose $a_s\in A$ with $\tp_{\A}^{n-1}(a_s)=s$, and create a root $v_s$
with $\lambda(v_s):=a_s$.

Recursively, for a node $v$ at depth $d<n$, write
$v_1\prec\ldots\prec v_d=v$ for its unique root-to-$v$ chain, and put
$c_i:=\lambda(v_i)$ and $t_v:=\tp_{\A}^{n-d}(c_1,\ldots,c_d)$.  Its successor
set is
\[
  \mathsf{Succ}(t_v)
  =
  \bigl\{
    \tp_{\A}^{n-d-1}(c_1,\ldots,c_d,a)
    \mid a\in A
  \bigr\}.
\]
For each $s\in\mathsf{Succ}(t_v)$, choose $a_{v,s}\in A$ with
$\tp_{\A}^{n-d-1}(c_1,\ldots,c_d,a_{v,s})=s$, and attach to $v$ a child
$w_{v,s}$ with $\lambda(w_{v,s}):=a_{v,s}$.  The construction stops at depth
$n$.  Fix all representative choices for the remainder of the construction.

The root set is non-empty because $A$ is non-empty.  By
\cref{lem:type-finite}, the root set and the set of children of every node are
finite.  Since the forest has depth $n$, $T_{\A}^{n}$ is finite and non-empty,
even when $\A$ is infinite.

Conceptually, $T_{\A}^{n}$ is a finite, type-collapsed unravelling of the
existential witnesses required by $\Theta_{\A}^{n}$: paths record successive
witnesses, while at each stage only one node for each distinct successor type
code is retained.

\begin{defi}[Canonical forest structure]\label{def:canonical-structure}
The domain of $\C_{\A}^{n}$ is the node set of $T_{\A}^{n}$.  For every
$r$-ary $R\in\sigma$ and all nodes $v_1,\ldots,v_r$, put
\[
  R^{\C_{\A}^{n}}(v_1,\ldots,v_r)
  \quad\Longleftrightarrow\quad
  v_1\prec\cdots\prec v_r
  \quad\text{and}\quad
  \A\models R(\lambda(v_1),\ldots,\lambda(v_r)).
\]
For $r=1$, the chain condition is vacuous.
\end{defi}

No relation symbol of arity greater than $n$ occurs in $\Theta_{\A}^{n}$,
since an $r$-ary atom can be reached from level $0$ only after introducing at
least $r$ variables.  Nevertheless, to make $\C_{\A}^{n}$ a
$\sigma$-structure, such symbols must still be interpreted.  The preceding
definition makes $R^{\C_{\A}^{n}}$ empty whenever $\operatorname{ar}(R)>n$,
because $T_{\A}^{n}$ has no downward chain containing
$\operatorname{ar}(R)$ nodes.

By definition, the label map
$\lambda:\C_{\A}^{n}\longrightarrow\A$ is a homomorphism.

\begin{lem}[Canonical-query property]\label{lem:canonical-query}
For every $\sigma$-structure $\B$,
\[
   \B\models\Theta_{\A}^{n}
   \quad\Longleftrightarrow\quad
   \text{there is a homomorphism }\C_{\A}^{n}\longrightarrow\B.
\]
In particular, $\C_{\A}^{n}\models\Theta_{\A}^{n}$.

\end{lem}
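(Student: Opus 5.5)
We prove both directions by relating, node by node, the subformulas of $\Theta_{\A}^{n}$ to the subtrees of $T_{\A}^{n}$. For a node $v$ at depth $d$ with root-to-$v$ chain $v_1\prec\cdots\prec v_d=v$, write $\bar v=(v_1,\ldots,v_d)$ and $\bar c_v=(\lambda(v_1),\ldots,\lambda(v_d))$, so that $t_v=\tp_{\A}^{n-d}(\bar c_v)$. By construction, the children of $v$ correspond bijectively to $\mathsf{Succ}(t_v)$, and the roots to $\mathsf{Succ}(\tp_{\A}^n(\epsilon))$. This matches exactly the conjunct structure of \eqref{eq:characteristic-formula}: $\chi_{t_v}^{d,n-d}$ is the conjunction of the atoms in $\at_{\A}(\bar c_v)$ together with one conjunct $\exists x_{d+1}\chi_s^{d+1,n-d-1}$ for each child $w_{v,s}$, and $s=t_{w_{v,s}}$.

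\emph{($\Leftarrow$).} Let $h:\C_{\A}^{n}\to\B$ be a homomorphism. We show, by downward induction on the depth $d$ from $n$ to $1$, that $\B\models\chi_{t_v}^{d,n-d}[h(\bar v)]$ for every node $v$ at depth $d$.
\begin{itemize}
  \item \emph{Atomic conjuncts.} Let $R(x_{d-r+1},\ldots,x_d)\in\at_{\A}(\bar c_v)$. Then $\A\models R(\lambda(v_{d-r+1}),\ldots,\lambda(v_d))$. Since $v_{d-r+1}\prec\cdots\prec v_d$ is a downward chain, \cref{def:canonical-structure} gives $R^{\C}(v_{d-r+1},\ldots,v_d)$, and $h$ transports this atom to $\B$.
  \item \emph{Existential conjuncts.} For each conjunct $\exists x_{d+1}\chi_s$, the child $w_{v,s}$ satisfies $t_{w_{v,s}}=s$, so the induction hypothesis makes $h(w_{v,s})$ a witness.
\end{itemize}
At depth $0$ the same argument, using the roots as witnesses, gives $\B\models\Theta_{\A}^{n}$. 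The rank-$0$ base case at depth $n$ consists of atoms only, so it is covered by the first item.

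\emph{($\Rightarrow$).} Assume $\B\models\Theta_{\A}^{n}$. We build $h$ top-down by depth. For each root $v_s$, pick a witness $b\in B$ with $\B\models\chi_s^{1,n-1}[b]$ and put $h(v_s):=b$. Inductively, suppose $h$ is defined on the chain $\bar v$ ending at $v$ and $\B\models\chi_{t_v}^{d,n-d}[h(\bar v)]$. For each child $w_{v,s}$, the conjunct $\exists x_{d+1}\chi_s^{d+1,n-d-1}$ supplies a witness $b$ with $\B\models\chi_s[h(\bar v)b]$; put $h(w_{v,s}):=b$. Because $T_{\A}^{n}$ is a forest, each node receives exactly one value, and the choices made in different branches do not interfere.

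To check that $h$ is a homomorphism, note that every tuple in $R^{\C}$ has the form $(v_{d-r+1},\ldots,v_d)$, a suffix of the root chain of its last node $v=v_d$, and that $\A\models R(\lambda(v_{d-r+1}),\ldots,\lambda(v_d))$. So the suffix atom $R(x_{d-r+1},\ldots,x_d)$ lies in $\at_{\A}(\bar c_v)$ and is therefore a conjunct of $\chi_{t_v}$. Since $\chi_{t_v}$ holds at $h(\bar v)$, we get $R^{\B}(h(v_{d-r+1}),\ldots,h(v_d))$. Unary relations are the case $r=1$. Symbols with $\operatorname{ar}(R)>n$ are empty in $\C$ and need no check.

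The final claim follows from the identity map on $\C_{\A}^{n}$ and ($\Leftarrow$). The main care point is the identification in the first paragraph: the formula indexed by $t_v$ is evaluated at the whole chain image $h(\bar v)$, and the $\C$-relations are defined exactly on suffixes of root chains. The rest is bookkeeping.
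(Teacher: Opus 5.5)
Your proof is correct and follows essentially the same route as the paper: you identify each depth-$d$ node $v$ with the occurrence of $\chi_{t_v}^{d,n-d}$ evaluated along its root chain, build the homomorphism by choosing witnesses top-down for ($\Rightarrow$), use $h(v)$ as witnesses with a leaves-to-roots induction for ($\Leftarrow$), and take the identity map for the final claim. Your version only spells out more explicitly than the paper the atom-to-chain correspondence and the fact that every relation tuple of $\C_{\A}^{n}$ is the final segment of a unique root chain.
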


\begin{proof}
Recall from~\eqref{eq:characteristic-formula} that sibling existential conjuncts may receive
different witnesses even when they reuse the same variable symbol.  For a
depth-$d$ node $v$ with root-to-$v$ chain
$v_1\prec\ldots\prec v_d=v$, put $c_i:=\lambda(v_i)$ and
$t_v:=\tp_{\A}^{n-d}(c_1,\ldots,c_d)$.  By construction, $v$ corresponds to
the occurrence $\exists x_d\chi_{t_v}^{d,n-d}$.  For a root $v_s$, this is
the root conjunct indexed by $s$.  If $d<n$, the children $w_{v,s}$ and the existential conjuncts in
$\chi_{t_v}^{d,n-d}$ are both indexed by $s\in\mathsf{Succ}(t_v)$, and the
choice of $\lambda(w_{v,s})$ gives $t_{w_{v,s}}=s$.

For $r\leq d$ and an $r$-ary $R\in\sigma$, the atom
$R(x_{d-r+1},\ldots,x_d)$ is a conjunct of $\chi_{t_v}^{d,n-d}$ exactly when
$\A\models R(c_{d-r+1},\ldots,c_d)$.  By
\cref{def:canonical-structure}, this is equivalent to the corresponding final
$r$ nodes forming a relation tuple in $\C_{\A}^{n}$.  Conversely, every
relation tuple in $\C_{\A}^{n}$ ends at a unique deepest node and is therefore
covered by this correspondence.

Suppose $\B\models\Theta_{\A}^{n}$.  Choose witnesses recursively from the
roots downward and map each node to its corresponding witness.  The preceding
correspondence shows that this map preserves every relation, and hence is a
homomorphism from $\C_{\A}^{n}$ to $\B$.

Conversely, let $h:\C_{\A}^{n}\to\B$ be a homomorphism, and use $h(v)$ as the
witness at the occurrence corresponding to $v$.  Along each root-to-$v$ chain,
this gives consistent values to the ancestor variables; sibling occurrences
need agree only on their common ancestors.  Since $h$ preserves every
corresponding relation tuple, every atomic conjunct is true.  Induction from
the leaves to the roots of the occurrence forest now gives
$\B\models\Theta_{\A}^{n}$.

Taking $\B=\C_{\A}^{n}$ and using the identity homomorphism gives
$\C_{\A}^{n}\models\Theta_{\A}^{n}$.
\end{proof}

We next record two structural consequences of this construction that
will be used in the reset argument.

\begin{prop}\label{prop:local-exact}
Let $v_1\prec\cdots\prec v_d$ be a consecutive downward parent--child segment in $T_{\A}^{n}$, with labels
$c_i=\lambda(v_i)$.  For every $r$-ary $R\in\sigma$ with $r\leq d$,
\[
 \C_{\A}^{n}\models R(v_{d-r+1},\ldots,v_d)
 \quad\Longleftrightarrow\quad
 \A\models R(c_{d-r+1},\ldots,c_d).
\]
\end{prop}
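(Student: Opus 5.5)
This is essentially an unfolding of \cref{def:canonical-structure}, applied to the final $r$ nodes of the segment. Fix an $r$-ary $R\in\sigma$ with $r\leq d$ and put $u_j:=v_{d-r+j}$ for $1\leq j\leq r$. By hypothesis the segment is consecutive, so $v_i\prec v_{i+1}$ for all $i<d$. In particular, the subsegment $u_1\prec\cdots\prec u_r$ is itself a chain of immediate parent--child steps in $T_{\A}^{n}$. This is exactly the chain condition appearing in \cref{def:canonical-structure}.

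With this in hand, \cref{def:canonical-structure} gives
\[
  \C_{\A}^{n}\models R(u_1,\ldots,u_r)
  \iff
  \bigl(u_1\prec\cdots\prec u_r\ \text{and}\ \A\models R(\lambda(u_1),\ldots,\lambda(u_r))\bigr).
\]
The first conjunct holds, so the right-hand side reduces to $\A\models R(c_{d-r+1},\ldots,c_d)$, since $\lambda(u_j)=c_{d-r+j}$. This proves both directions at once.

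The edge cases are also covered. For $r=1$ the chain condition is vacuous, as noted after \cref{def:canonical-structure}, and the equivalence is immediate. For $r=0$ there is nothing to check, since $\sigma$ has only symbols of positive arity. The only point needing care, and it is mild, is to read ``consecutive'' correctly. The segment need not start at a root, but consecutiveness guarantees that every pair of adjacent nodes is an immediate parent--child pair. That is precisely what the chain condition requires for the last $r$ nodes. No appeal to type codes or to \cref{lem:canonical-query} is needed.
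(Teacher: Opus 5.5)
Your proposal is correct and takes the same approach as the paper: you observe that the final $r$ nodes of a consecutive segment automatically satisfy the chain condition, so the equivalence is just the defining clause for $R^{\C_{\A}^{n}}$ in \cref{def:canonical-structure}. Your version simply spells out that one-line argument in more detail.
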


\begin{proof}
The displayed nodes form a consecutive downward chain.  The equivalence is therefore exactly the defining clause for $R^{\C_{\A}^{n}}$.
\end{proof}

Here and below, every copy of $\C_{\A}^{n}$ carries the parent--child relation
and labelling map transported from $T_{\A}^{n}$.

\begin{prop}\label{prop:break}
Let $\D$ be a disjoint union of copies of $\C_{\A}^{n}$.  If
$\D\models R(v_1,\ldots,v_r)$ for an $r$-ary $R\in\sigma$, then all $v_i$
lie in the same copy and $v_i\prec v_{i+1}$ for every $i<r$.
\end{prop}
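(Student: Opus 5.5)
The plan is to unwind the two definitions involved: the relations of a disjoint union and the relations of a single copy of $\C_{\A}^{n}$. Write $\D=\biguplus_{i\in I}\D_i$, where each $\D_i$ is a copy of $\C_{\A}^{n}$. Each copy carries the parent--child relation $\prec$ and the labelling transported from $T_{\A}^{n}$. By the definition of the disjoint union, $R^{\D}$ is the union over $i\in I$ of the images of $R^{\D_i}$ under the canonical embeddings. So the first step is to observe that if $\D\models R(v_1,\ldots,v_r)$, then there is a single index $i$ with $(v_1,\ldots,v_r)\in R^{\D_i}$. This gives the claim that all $v_i$ lie in the same copy, since no relation tuple of $\D$ contains elements from distinct components.

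The second step works inside that copy $\D_i$. Since $\D_i$ is an isomorphic copy of $\C_{\A}^{n}$ and $\prec$ is transported along the isomorphism, we may identify $\D_i$ with $\C_{\A}^{n}$ itself. By \cref{def:canonical-structure}, $R^{\C_{\A}^{n}}(v_1,\ldots,v_r)$ holds only if $v_1\prec\cdots\prec v_r$ and $\A\models R(\lambda(v_1),\ldots,\lambda(v_r))$. Reading off the first conjunct gives $v_j\prec v_{j+1}$ for every $j<r$. For $r=1$ the chain condition is vacuous, and there is nothing to prove beyond membership in a single copy.

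There is no real obstacle here. The only point needing care is to make explicit that the parent--child relation on $\D$ is taken copywise, namely the transported relation on each component with no cross-component pairs. With that convention, ``$v_j\prec v_{j+1}$'' is meaningful in $\D$ and agrees with the chain condition in the copy containing the tuple. The proof is therefore a two-line verification from the definition of disjoint union and \cref{def:canonical-structure}.
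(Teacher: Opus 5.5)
Your proposal is correct and is essentially the paper's own argument: every relation tuple of the disjoint union lies within a single copy, and the chain condition $v_1\prec\cdots\prec v_r$ is then read off directly from \cref{def:canonical-structure}. Your remark that $\prec$ on $\D$ is the copywise transported relation matches the convention the paper states just before this proposition.
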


\begin{proof}
Every relation tuple in $\D$ comes from a single copy of $\C_{\A}^{n}$, and
the conclusion then follows from \cref{def:canonical-structure}.
\end{proof}

\subsection{The disjoint-copy reset lemma}\label{sec:reset}

The next step is the central technical comparison.  It shows that fluted
sentences of quantifier rank at most $n$ cannot distinguish $\A$ together with
sufficiently many canonical copies from the copies alone.  Fix a
$\sigma$-structure $\A$ and $n\geq1$, and write $\C:=\C_{\A}^{n}$.  Let
$\D$ be a disjoint union of $n+1$ isomorphic copies of $\C$, chosen disjoint
from $\A$, and put
\[
   \A^{\dagger}:=\A\uplus\D.
\]
The $n+1$ copies ensure that an unused component remains available whenever
the rank-$n$ comparison resets the active suffix.

\begin{lem}[Disjoint-copy reset lemma]\label{lem:reset}
There is a rank-$n$ fluted bisimulation $(Z_0,\ldots,Z_n)$ between
$\A^{\dagger}$ and $\D$ with $(\epsilon,\epsilon)\in Z_n$.  Consequently,
\[
   \A^{\dagger}\equiv_n^{\FL}\D.
\]
\end{lem}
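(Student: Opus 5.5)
The plan is to let $Z_j$ consist only of pairs of tuples of length $m=n-j$. Each forth or back move appends one element and lowers $j$ by one, so $(\epsilon,\epsilon)\in Z_n$ is the starting point. By \cref{prop:break}, and because no relation tuple of $\A^{\dagger}$ meets two components, the truth of a suffix atom at a tuple depends only on its \emph{active suffix}. For a tuple $\bar b$ over $\D$ this is the longest suffix that is a consecutive chain $v_1\prec\cdots\prec v_d$ inside one copy. For a tuple $\bar a$ over $\A^{\dagger}$ it is the longest suffix lying entirely in $\A$ if the last entry is in $\A$, and otherwise the longest consecutive $\prec$-chain suffix inside one copy of $\C$. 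The invariant for $(\bar a,\bar b)\in Z_j$ is that the two active suffixes have the same length $d$ and are matched in one of two modes.

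\emph{Copy mode.} Both active suffixes are $\prec$-chains in copies of $\C$, and they correspond under the canonical isomorphism between those two copies.

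\emph{Label mode.} The active suffix $\bar a''$ of $\bar a$ lies in $\A$. The active suffix of $\bar b$ is a chain $v_1\prec\cdots\prec v_d$ in some copy, with $v_1$ a root, and $\tp_{\A}^{n-d}(\bar a'')=t_{v_d}=\tp_{\A}^{n-d}(\lambda(v_1),\ldots,\lambda(v_d))$.

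Atomic harmony then follows. In copy mode it holds because the copies are isomorphic. In label mode it follows from \cref{prop:local-exact} together with equality of the rank-$0$ parts of the type codes. Atoms longer than $d$ fail on both sides by \cref{prop:break}; in label mode this uses that $v_1$ is a root, so the preceding entry of $\bar b$ cannot be its parent. The rank-$(n-d)$ condition is stronger than the rank-$j$ condition one might expect, since $j=n-m\leq n-d$. It is exactly what allows the construction to continue along the chain.

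Forth. Suppose $a'\in A$ is appended. If the pair is in label mode, then \cref{lem:chi-basic}(ii), applied at rank $n-d$, yields some $s\in\mathsf{Succ}(t_{v_d})$ with $s=\tp_{\A}^{n-d-1}(\bar a''a')$. We append the child $w_{v_d,s}$ and remain in label mode with length $d+1$. If the last entry of $\bar a$ lies in $D$ or $\bar a=\epsilon$, the new active suffix is $a'$ alone. We then append the root $v_s$ with $s=\tp_{\A}^{n-1}(a')$ in a copy that does not occur in $\bar b$, which is again label mode.

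Now suppose $a'\in D$. If $a'$ extends the active chain of $\bar a$ in copy mode, we append its isomorphic image. Otherwise $a'$ starts a new active suffix, and on the $\D$ side we must append a copy-isomorphic image of $a'$ that is \emph{not} a child of the last entry of $\bar b$. Since $|\bar b|\leq n$ and there are $n+1$ copies, some copy is unused by $\bar b$, and we take the image of $a'$ there. This pigeonhole step is where the $n+1$ copies are needed.

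Back. Suppose $b'\in D$ is appended. If $b'$ is a child $w_{v_d,s}$ of $v_d$ in label mode, then $s\in\mathsf{Succ}(t_{v_d})=\mathsf{Succ}(\tp_{\A}^{n-d}(\bar a''))$, so some $a'\in A$ realises $s$ and label mode is preserved. If $b'$ extends the chain in copy mode, we take its isomorphic image in $\A^{\dagger}$. Otherwise $b'$ starts a new suffix. We then take a copy-isomorphic image of $b'$ in $\A^{\dagger}$, chosen in a copy unused by $\bar a$ (again by pigeonhole) so that it is not a child of the last entry of $\bar a$, and pass to copy mode.

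The main obstacle is the bookkeeping in label mode. Roots are reached only at rank $n-1$, so label mode must always start at a root. The rank stored there must be $n-d$ rather than $j$, so that successor codes line up with the children in $T_{\A}^{n}$. One must also check that every case resetting the active suffix has an available fresh copy, which the bound $m\leq n$ guarantees. Once $(Z_0,\ldots,Z_n)$ is verified, \cref{prop:bisimulation-invariance} gives $\A^{\dagger}\equiv_n^{\FL}\D$.
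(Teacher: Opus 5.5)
Your proposal is correct and follows essentially the same route as the paper. It uses the same copy and $A$--branch configurations, with the rank-$(n-\ell)$ type-code invariant along a branch starting at a forest root, the same forth, back and reset cases, and the same pigeonhole use of the $n+1$ copies. The only differences are in the bookkeeping: you define the active suffix intrinsically as the maximal chain suffix or $A$-suffix, whereas the paper uses existentially chosen certificates with an explicit cut condition, and the paper keeps every move inside the active copy in that copy rather than resetting on non-child moves.
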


\begin{proof}
We define $Z_{n-m}$ by certificates for tuple pairs of length $m$.  We need
to check three points: $(\epsilon,\epsilon)\in Z_n$; every one-element move
from a pair in $Z_{j+1}$ has a certificate-preserving match in $Z_j$; and
every certified pair satisfies atomic harmony.

\paragraph{Candidate relations.}
For $0\leq m\leq n$, let $Z_{n-m}$ consist exactly of those pairs
$(\bar u,\bar w)\in(A^{\dagger})^m\times D^m$ that are certified in the
following sense.  The empty pair $(\epsilon,\epsilon)$ is certified by
convention, and hence belongs to $Z_n$.  Atomic harmony at length $0$ holds
because the signature has no nullary relation symbols.

For $m\geq1$, a \emph{certificate} for $(\bar u,\bar w)$ consists of a choice
of $1\leq\ell\leq m$.  The final $\ell$ entries of both tuples, called their
\emph{active suffixes}, must have one of the two forms below.  If $\ell<m$,
call the adjacent positions $m-\ell$ and $m-\ell+1$ the \emph{cut}; the pair
in $\A^{\dagger}$ at the cut must cross components, whereas the pair in $\D$
must either cross copies of $\C$ or fail to be a downward parent--child pair.
No condition is imposed on the entries before the cut: every current atom
involving an earlier position also contains both positions of the cut.
\begin{itemize}
  \item \emph{Copy configuration.}  They lie in one copy of $\C$ in each
  structure, and a fixed copy isomorphism maps the suffix in $\A^{\dagger}$
  pointwise to the suffix in $\D$.
  \item \emph{$A$--branch configuration.}  The suffix in $\A^{\dagger}$ is
  $(a_1,\ldots,a_\ell)\in A^\ell$, while the suffix in $\D$ is a consecutive
  branch $v_1\prec\cdots\prec v_\ell$ beginning at a root.  Writing
  $c_i:=\lambda(v_i)$, we require
  \begin{equation}
     \tp_{\A}^{n-\ell}(a_1,\ldots,a_\ell)
     =
     \tp_{\A}^{n-\ell}(c_1,\ldots,c_\ell).
     \label{eq:reset-invariant}
  \end{equation}
\end{itemize}

\paragraph{Forth and back.}
Fix a certified pair in $Z_{n-m}$ with $m<n$, together with one of its
certificates.  We must match any next element
chosen in either structure so that the extended pair has a certificate of
length $m+1$.  On the side where the match is made, the current tuple meets at
most $m<n$ copies of $\C$, so an unused copy is available.  Since equality is
absent, the response need not repeat an earlier element.

At length $0$, a move starts one of the two configurations.  Thereafter, in a
copy configuration, a move stays in the active copy, enters $A$ (only in
$\A^{\dagger}$), or enters another copy; in an $A$--branch configuration, it
either continues the branch or breaks it.  These possibilities give the
following three exhaustive cases.

\begin{enumerate}[label=\textup{(\roman*)}]
\item \emph{Continue a copy.}
In the copy configuration, if the chosen node lies in the active copy, use
its image or preimage under the fixed copy isomorphism.  Appending these nodes
extends the active suffix and preserves the certificate.

\item \emph{Start or continue an $A$--branch.}
Suppose $a\in A$ is chosen in $\A^{\dagger}$ when the current pair is empty
or has a copy certificate.  The set
$\mathsf{Succ}(\tp_{\A}^{n}(\epsilon))$ contains
$\tp_{\A}^{n-1}(a)$, so a copy not met by the current tuple in $\D$ has a
root $v$ whose label $c$ satisfies
\[
   \tp_{\A}^{n-1}(a)=\tp_{\A}^{n-1}(c).
\]
Taking $(a,v)$ as the new active suffixes yields an $A$--branch certificate.
If $m>0$, the new cut separates $A$ from a copy of $\C$ in
$\A^{\dagger}$ and two distinct copies in $\D$.

Now suppose an $A$--branch of length $\ell$ is active.  Since
$\ell\leq m<n$, put $q:=n-\ell-1\geq0$.  If $a_{\ell+1}\in A$ is chosen,
apply the forth condition in \cref{lem:chi-basic}(ii) to
\eqref{eq:reset-invariant} with
$k=q$.  It shows that the rank-$q$ type code of
$(a_1,\ldots,a_\ell,a_{\ell+1})$ is realised as a successor type code over
$(c_1,\ldots,c_\ell)$.  The forest construction therefore supplies a child
$v_{\ell+1}$ whose label $c_{\ell+1}:=\lambda(v_{\ell+1})$ satisfies
\begin{equation}
  \tp_{\A}^{q}(a_1,\ldots,a_\ell,a_{\ell+1})
  =
  \tp_{\A}^{q}(c_1,\ldots,c_\ell,c_{\ell+1}).
  \label{eq:reset-invariant-plus}
\end{equation}
Conversely, a child $v_{\ell+1}$ chosen in $\D$ represents a successor type
code, so the back condition in \cref{lem:chi-basic}(ii), again applied to
\eqref{eq:reset-invariant} with $k=q$, supplies $a_{\ell+1}\in A$ satisfying the same
equality.  Thus either move extends the $A$--branch certificate.

\item \emph{Reset to a copy.}
All remaining moves choose a node of a copy of $\C$ without continuing the
current configuration: a node outside the active copy in the copy
configuration, a node in a copy of $\C$ inside $\A^{\dagger}$ during an
$A$--branch, or a non-child of $v_\ell$ in $\D$ during an $A$--branch.  Match
the chosen node by taking, on the opposite side, the corresponding node in a
copy not met by the current tuple there, and make this singleton pair the new
active suffix.  This also handles the first move when the chosen element is a
node of $\C$.

If $m>0$, the reset creates a cut.  On the side where the node was chosen,
the cut either crosses components or, in the last case above, is not a
parent--child pair; on the matching side it crosses into an unused copy.
Hence the cut conditions hold.
\end{enumerate}
In each case the extended pair belongs to $Z_{n-m-1}$.  This proves both the
forth and back conditions.

\paragraph{Atomic harmony.}
Fix a certified pair of length $1\leq m\leq n$, let $\ell$ be the length of
its active suffix, and consider an $r$-ary current fluted atom.

If $r\leq\ell$, the atom lies wholly inside the active suffix.  In the copy
configuration, the copy isomorphism preserves its truth.  In the
$A$--branch configuration,~\eqref{eq:reset-invariant} entails
\[
   \at_{\A}(a_1,\ldots,a_\ell)
   =
   \at_{\A}(c_1,\ldots,c_\ell),
\]
also when $n-\ell=0$; \cref{prop:local-exact} then transfers the atom from the
label tuple to the forest branch in $\D$.

If $r>\ell$, then $\ell<m$ and
$m-r+1\leq m-\ell<m-\ell+1\leq m$, so the atom contains the cut pair.  It is
false in $\A^{\dagger}$ because the cut crosses components, and false in
$\D$ by \cref{prop:break}.

\paragraph{Conclusion.}
We have shown that $(\epsilon,\epsilon)\in Z_n$, that every move from
$Z_{j+1}$ can be matched into $Z_j$, and that every pair in every $Z_j$
satisfies atomic harmony.  Hence $(Z_0,\ldots,Z_n)$ is a rank-$n$ fluted
bisimulation, and
\cref{prop:bisimulation-invariance} yields the claimed equivalence.
\end{proof}

\subsection{Proof of the homomorphism preservation theorem}\label{sec:hpt-proof}

We can now assemble the construction into the equirank theorem.

\begin{thm}[Homomorphism preservation theorem]\label{thm:hpt-main}
Let $\varphi$ be a sentence in $\FL[0](\sigma)$, and put
$n:=\qr(\varphi)$.  The following are
equivalent, with preservation and equivalence taken over all
$\sigma$-structures:
\begin{enumerate}[label=\textup{(\roman*)}]
  \item $\varphi$ is preserved under homomorphisms;
  \item $\varphi$ is equivalent to some $\psi\in\EPFL[0](\sigma)$ with
  $\qr(\psi)\leq n$.
\end{enumerate}
\end{thm}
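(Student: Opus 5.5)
The direction (ii)$\Rightarrow$(i) is routine and I would dispose of it first.  Existential-positive fluted formulas are preserved under homomorphisms: by induction on $\psi\in\EPFL[m](\sigma)$, if $h:\A\to\B$ and $\A\models\psi[\bar a]$ then $\B\models\psi[h(\bar a)]$.  Atoms transfer because $h$ is a homomorphism, and truth constants are trivial.  Conjunction and disjunction follow from the induction hypothesis.  For $\exists x_{m+1}$, a witness $a'$ is sent to $h(a')$.  The equivalence in (ii) then transfers preservation to $\varphi$.

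For (i)$\Rightarrow$(ii), first handle $n=0$ separately.  Without nullary symbols, $\varphi$ is a Boolean combination of $\true,\false$, so it is equivalent to $\true$ or $\false$, both of which lie in $\EPFL[0](\sigma)$.  Now assume $n\geq1$.  My candidate is
\[
  \psi:=\bigvee\bigl\{\chi^{0,n}_{t}\mid t=\tp^n_{\A}(\epsilon)\text{ for some }\A\models\varphi\bigr\}.
\]
By \cref{lem:type-finite} this is a finite disjunction; if no model exists, the disjunction is empty and $\psi:=\false$.  By \cref{def:characteristic-formula}, $\psi\in\EPFL[0](\sigma)$ and $\qr(\psi)\leq n$.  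The implication $\varphi\models\psi$ is immediate from \cref{lem:chi-basic}(i), since every model $\A$ of $\varphi$ satisfies its own $\Theta^n_{\A}$.

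The substantive implication is $\psi\models\varphi$.  Suppose $\B\models\psi$, so $\B\models\Theta^n_{\A}$ for some $\A\models\varphi$.  By \cref{lem:canonical-query} there is a homomorphism $g:\C^n_{\A}\to\B$.  The task is then to show $\C^n_{\A}\models\varphi$, after which preservation under homomorphisms gives $\B\models\varphi$.

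To get $\C^n_{\A}\models\varphi$, I would run a short chain of transfers.
\begin{enumerate}
  \item Form $\A^\dagger=\A\uplus\D$ as in \cref{lem:reset}.  The inclusion $\A\to\A^\dagger$ is a homomorphism, so $\A^\dagger\models\varphi$ by preservation.
  \item \Cref{lem:reset} gives $\A^\dagger\equiv^{\FL}_n\D$.  Since $\qr(\varphi)=n$, this yields $\D\models\varphi$.
  \item $\D$ is a disjoint union of $n+1$ copies of $\C^n_{\A}$, so folding all copies onto one is a homomorphism $\D\to\C^n_{\A}$.  Preservation then gives $\C^n_{\A}\models\varphi$.
\end{enumerate}
The real difficulty, the bisimulation between $\A^\dagger$ and $\D$, is already contained in \cref{lem:reset}.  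What remains delicate here is bookkeeping: the reset lemma needs $n\geq1$, which is why $n=0$ is treated separately, and the disjunction must be indexed by type codes rather than by structures.  Since every set used is finite, the same argument should go through verbatim over finite structures.
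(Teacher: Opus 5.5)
Your proposal is correct and follows the paper's proof essentially step for step: the same disjunction of characteristic sentences indexed by the finitely many rank-$n$ root type codes of models of $\varphi$, the entailment $\varphi\models\psi$ via \cref{lem:chi-basic}(i), and the converse via the canonical-query homomorphism combined with the transfer chain $\A\to\A^{\dagger}\equiv_n^{\FL}\D\to\C_{\A}^{n}$ (packaged in the paper as \cref{lem:canonical-in-class} and \cref{cor:theta-entails}). Treating the unsatisfiable case as an empty disjunction equal to $\false$, instead of splitting it off separately as the paper does, is only a cosmetic difference.
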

\begin{lem}\label{lem:ep-hom}
Let $h:\A\to\B$ be a homomorphism of $\sigma$-structures.  For every
$\eta\in\EPFL[m](\sigma)$ and every $\bar a\in A^m$, write
$h(\bar a)=(h(a_1),\ldots,h(a_m))$.  Then
\[
  \A\models\eta[\bar a]
  \quad\Longrightarrow\quad
  \B\models\eta[h(\bar a)].
\]
In particular, every sentence in $\EPFL[0](\sigma)$ is preserved under
homomorphisms.
\end{lem}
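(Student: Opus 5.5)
We argue by structural induction on $\eta$, uniformly over all levels $m$ and all tuples $\bar a\in A^m$. The claim is a statement about every $\eta\in\EPFL[m](\sigma)$, so the induction hypothesis for a quantified formula is applied at level $m+1$. That is permitted because the induction is on formula structure rather than on level.

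\emph{Base cases.} The truth constants $\true$ and $\false$ have the same value everywhere, so the implication is trivial for them.

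For a suffix atom $R(x_{m-r+1},\ldots,x_m)$, the hypothesis $\A\models R(a_{m-r+1},\ldots,a_m)$ means that $(a_{m-r+1},\ldots,a_m)\in R^{\A}$. The homomorphism condition then gives $(h(a_{m-r+1}),\ldots,h(a_m))\in R^{\B}$. Since the evaluation convention reads only the suffix of the tuple, and $h$ acts coordinatewise, this suffix is exactly the suffix of $h(\bar a)$. Hence $\B\models\eta[h(\bar a)]$.

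\emph{Boolean cases.} For $\eta=\eta_1\wedge\eta_2$ and $\eta=\eta_1\vee\eta_2$, both components lie in $\EPFL[m](\sigma)$. The claim follows directly from the induction hypothesis applied to $\eta_1$ and $\eta_2$ at the same tuple $\bar a$.

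\emph{Existential case.} Let $\eta=\exists x_{m+1}\eta'$ with $\eta'\in\EPFL[m+1](\sigma)$. If $\A\models\eta[\bar a]$, choose a witness $a'\in A$ with $\A\models\eta'[\bar a a']$. The induction hypothesis at level $m+1$ gives $\B\models\eta'[h(\bar a)h(a')]$, because $h(\bar a a')=h(\bar a)h(a')$. So $h(a')$ witnesses $\B\models\eta[h(\bar a)]$.

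No negation or universal quantifier case arises, since $\EPFL$ is generated without them. This is the only place where positivity is used.

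\emph{Sentences.} The final assertion is the case $m=0$ with $\bar a=\epsilon$, noting that $h(\epsilon)=\epsilon$.

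\emph{Main difficulty.} The only delicate point is bookkeeping of the fluted evaluation convention: the variables that are free in $\eta$ are a suffix, and entries before that suffix are ignored. Since $h$ is applied to the whole tuple coordinatewise, ignored entries remain ignored and suffixes correspond exactly. So I expect no real obstacle.
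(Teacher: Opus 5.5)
Your proposal is correct and follows essentially the same route as the paper: structural induction on $\eta$, with atoms handled by the homomorphism condition, the Boolean cases by the induction hypothesis, and the existential case by taking $h$ of a witness, the sentence case being $m=0$. Your extra bookkeeping about the suffix evaluation convention is accurate but not strictly needed, since $h$ acts coordinatewise and suffixes correspond automatically.
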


\begin{proof}
We prove by structural induction on $\eta$.  Atomic formulas are preserved by
the definition of homomorphism, and truth constants are preserved as well.
The conjunction and disjunction cases follow directly from the induction
hypothesis.  Finally, suppose
$\A\models(\exists x_{m+1}\theta)[\bar a]$ and let $b\in A$ be a witness.
The induction hypothesis gives
\[
  \B\models\theta[h(\bar a)h(b)].
\]
Therefore $h(b)$ witnesses $\exists x_{m+1}\theta$ in $\B$.  This covers every
constructor of $\EPFL$.
\end{proof}

By \cref{lem:ep-hom}, this proves \textup{(ii)}$\Rightarrow$\textup{(i)} in
\cref{thm:hpt-main}.  Indeed, if $\varphi$ is equivalent to
$\psi\in\EPFL[0](\sigma)$, $h:\A\to\B$, and $\A\models\varphi$, then
$\A\models\psi$, hence $\B\models\psi$, and therefore $\B\models\varphi$.

For the reverse implication, the only substantive case is where $n\geq1$ and
$\varphi$ is satisfiable.  We first prove that $\Theta_{\A}^{n}$ entails
$\varphi$ whenever $\A\models\varphi$; finiteness of the root type codes will
then yield the required finite disjunction.

\begin{lem}[Canonical forests remain in the class]\label{lem:canonical-in-class}
Let $\varphi\in\FL[0](\sigma)$, where $\qr(\varphi)=n\geq1$, and suppose
that $\varphi$ is preserved under homomorphisms.  If $\A$ is a
$\sigma$-structure satisfying $\varphi$, then
\[
   \C_{\A}^{n}\models\varphi.
\]
\end{lem}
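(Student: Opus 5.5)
The plan is to pass from $\A$ to $\C_{\A}^{n}$ through the auxiliary structure $\A^{\dagger}=\A\uplus\D$ of \cref{lem:reset}, where $\D$ is a disjoint union of $n+1$ copies of $\C:=\C_{\A}^{n}$. We use homomorphism preservation twice, going in opposite directions, and insert the rank-$n$ equivalence of \cref{lem:reset} between the two steps.

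\emph{Step 1: from $\A$ to $\A^{\dagger}$.} The inclusion of $\A$ as a component of $\A^{\dagger}=\A\uplus\D$ is a homomorphism $\A\to\A^{\dagger}$. Relations in a disjoint union are interpreted componentwise, so every relation tuple of $\A$ stays a relation tuple in the corresponding component. Since $\A\models\varphi$ and $\varphi$ is preserved under homomorphisms, we get $\A^{\dagger}\models\varphi$.

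\emph{Step 2: from $\A^{\dagger}$ to $\D$.} \Cref{lem:reset} gives $\A^{\dagger}\equiv_n^{\FL}\D$. Because $\qr(\varphi)=n$, it follows that $\D\models\varphi$. This is the substantive step, but all of the work is already in \cref{lem:reset}. The only thing to check is that its hypothesis $n\geq1$ matches the assumption $\qr(\varphi)=n\geq1$ made here, and it does.

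\emph{Step 3: from $\D$ to $\C$.} The folding map $\D\to\C$ sends each copy onto $\C$ by its copy isomorphism. It is a homomorphism: by \cref{prop:break}, every relation tuple of $\D$ lies inside a single copy, and the copy isomorphism maps it to a relation tuple of $\C$. Applying preservation under homomorphisms once more gives $\C_{\A}^{n}\models\varphi$.

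I do not expect any of these steps to be an obstacle. The only delicate points are bookkeeping. First, the ``disjoint copy'' identification must be used consistently, so that the inclusion and the folding map really are homomorphisms. Second, $\D$ must be non-empty; this holds because $T_{\A}^{n}$ is non-empty, as noted after the forest construction.
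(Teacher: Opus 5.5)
Your proposal is correct and follows the paper's proof essentially step for step. The paper uses the same transfer chain $\A\to\A^{\dagger}\equiv_n^{\FL}\D\to\C$: the canonical injection into $\A^{\dagger}$, then \cref{lem:reset} with $\qr(\varphi)=n$, then the fold of $\D$ onto $\C$, which is a homomorphism because every relation tuple of $\D$ lies in a single component.
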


\begin{proof}
Put $\C:=\C_{\A}^{n}$.  Let $\D$ be a disjoint union of $n+1$ copies of
$\C$, chosen disjoint from $\A$, and put $\A^{\dagger}:=\A\uplus\D$.
The argument follows the transfer chain
\[
   \A\longrightarrow\A^{\dagger}
   \equiv_n^{\FL}\D
   \longrightarrow\C.
\]
The first map is the canonical injection into the $\A$-component of
$\A^{\dagger}$, so homomorphism preservation gives
$\A^{\dagger}\models\varphi$.  The middle equivalence is
\cref{lem:reset}; since $\qr(\varphi)=n$, it gives $\D\models\varphi$.
For the final map, fold each component of $\D$ isomorphically onto $\C$.
Every relation tuple of $\D$ lies within one component, so the fold preserves
relations and is a homomorphism.  Hence $\C\models\varphi$.
\end{proof}

\begin{cor}[Characteristic sentences of $\varphi$-structures entail $\varphi$]\label{cor:theta-entails}
Let $\varphi\in\FL[0](\sigma)$ have quantifier rank $n\geq1$ and be preserved
under homomorphisms over all $\sigma$-structures.  For every
$\sigma$-structure $\A$ satisfying $\varphi$,
\[
   \models\Theta_{\A}^{n}\rightarrow\varphi.
\]
\end{cor}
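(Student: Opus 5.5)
The plan is to combine \cref{lem:canonical-query} with \cref{lem:canonical-in-class} and then use homomorphism preservation once more.

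Let $\B$ be any $\sigma$-structure with $\B\models\Theta_{\A}^{n}$. We must show $\B\models\varphi$. By \cref{lem:canonical-query}, the hypothesis $\B\models\Theta_{\A}^{n}$ yields a homomorphism $h:\C_{\A}^{n}\to\B$. Since $\A\models\varphi$ and $\varphi$ is preserved under homomorphisms with $\qr(\varphi)=n\geq1$, \cref{lem:canonical-in-class} gives $\C_{\A}^{n}\models\varphi$. Preservation of $\varphi$ under the homomorphism $h$ then gives $\B\models\varphi$. Since $\B$ was arbitrary, this establishes $\models\Theta_{\A}^{n}\rightarrow\varphi$.

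All the substantive work already lies in \cref{lem:reset} and \cref{lem:canonical-in-class}, so this step is essentially a composition. The only point needing care is that \cref{lem:canonical-query} quantifies over all $\sigma$-structures $\B$, including infinite ones, and that $\C_{\A}^{n}$ is a genuine $\sigma$-structure with a non-empty domain. Both facts were recorded in the forest construction, where the root set was shown to be non-empty because $A$ is non-empty. Hence homomorphism preservation applies to $\C_{\A}^{n}$ with no further hypotheses. I expect no real obstacle here.
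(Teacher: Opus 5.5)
Your proposal is correct and is exactly the paper's argument: \cref{lem:canonical-query} gives a homomorphism $\C_{\A}^{n}\to\B$, \cref{lem:canonical-in-class} gives $\C_{\A}^{n}\models\varphi$, and homomorphism preservation transfers $\varphi$ to $\B$. Your added remark that $\C_{\A}^{n}$ has a non-empty domain is a harmless sanity check.
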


\begin{proof}
If $\B\models\Theta_{\A}^{n}$, then \cref{lem:canonical-query} gives a
homomorphism $\C_{\A}^{n}\to\B$.  By \cref{lem:canonical-in-class},
$\C_{\A}^{n}\models\varphi$, so homomorphism preservation yields
$\B\models\varphi$.
\end{proof}

\begin{proof}[Proof of \textup{(i)}$\Rightarrow$\textup{(ii)} in
\cref{thm:hpt-main}]
Suppose that $\varphi$ is preserved under homomorphisms, and put
$n=\qr(\varphi)$.  If $n=0$, then, because the signature has no nullary
relation symbols, $\varphi$ is equivalent to $\true$ or $\false$, both of
which belong to $\EPFL[0](\sigma)$.  We may therefore assume that $n\geq1$.
If $\varphi$ is unsatisfiable, then it is equivalent to $\false$, so assume
also that $\varphi$ is satisfiable.

It remains to collect the finitely many characteristic sentences arising from
$\sigma$-structures satisfying $\varphi$.  Consider the set of their rank-$n$
root type codes:

\[
   \mathcal T_{\varphi}
   :=
   \bigl\{
      \tp_{\A}^{n}(\epsilon)
      \mid
      \A\text{ is a $\sigma$-structure satisfying }\varphi
   \bigr\}.
\]
By \cref{lem:type-finite} and the satisfiability of $\varphi$,
$\mathcal T_{\varphi}$ is finite and non-empty.  Define
\begin{equation}
   \psi_{\varphi}
   :=
   \bigvee_{t\in\mathcal T_{\varphi}}
      \chi_t^{0,n}
   .
   \label{eq:forest-normal-form}
\end{equation}
By \cref{def:characteristic-formula}, each disjunct belongs to
$\EPFL[0](\sigma)$ and has quantifier rank at most $n$.  Hence the finite
disjunction satisfies
\[
   \psi_{\varphi}\in\EPFL[0](\sigma)
   \qquad\text{and}\qquad
   \qr(\psi_{\varphi})\leq n.
\]

If $\B\models\varphi$, then
$\tp_{\B}^{n}(\epsilon)\in\mathcal T_{\varphi}$, and
\cref{lem:chi-basic}(i) gives
$\B\models\chi_{\tp_{\B}^{n}(\epsilon)}^{0,n}$.  Hence
$\B\models\psi_{\varphi}$.

Conversely, if $\B\models\psi_{\varphi}$, then
$\B\models\chi_t^{0,n}$ for some $t\in\mathcal T_{\varphi}$.  Choose
$\A\models\varphi$ with $t=\tp_{\A}^{n}(\epsilon)$.  Since
$\chi_t^{0,n}=\Theta_{\A}^{n}$, \cref{cor:theta-entails} gives
$\B\models\varphi$.  We have proved both entailments, and hence
\[
   \models\varphi\leftrightarrow\psi_{\varphi}.\qedhere
\]
\end{proof}

\begin{cor}[Forest-query normal form]\label{cor:forest-normal-form}
Let $\varphi\in\FL[0](\sigma)$ have quantifier rank $n\geq1$ and be
preserved under homomorphisms over all $\sigma$-structures.  Then $\varphi$
is either unsatisfiable or
equivalent to a finite non-empty disjunction
\[
   \Theta_{\A_1}^{n}\vee\cdots\vee\Theta_{\A_s}^{n},
\]
where $\A_i\models\varphi$ for every $1\leq i\leq s$.  Each disjunct is a
primitive-positive fluted sentence: it is built from atoms and truth constants
using conjunction and existential quantification.  Its canonical structure is
carried by a finite forest of height at most $n$, and every relation tuple lies
on a consecutive downward chain.
\end{cor}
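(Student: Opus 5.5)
This corollary is read off from the proof of (i)$\Rightarrow$(ii) in \cref{thm:hpt-main}. Assume $\varphi$ is satisfiable; otherwise the first alternative holds. Take the set $\mathcal T_{\varphi}$ of rank-$n$ root type codes of models of $\varphi$. It is finite and non-empty by \cref{lem:type-finite} and satisfiability, so we can enumerate it as $t_1,\ldots,t_s$ with $s\geq1$. For each $i$, choose a witness $\A_i\models\varphi$ with $\tp_{\A_i}^{n}(\epsilon)=t_i$. By \cref{def:characteristic-formula}, $\chi_{t_i}^{0,n}=\Theta_{\A_i}^{n}$, so $\psi_{\varphi}$ from \eqref{eq:forest-normal-form} is literally $\Theta_{\A_1}^{n}\vee\cdots\vee\Theta_{\A_s}^{n}$. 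The proof of the theorem already established $\models\varphi\leftrightarrow\psi_{\varphi}$. In slightly more detail, each $\B\models\varphi$ satisfies its own disjunct by \cref{lem:chi-basic}(i), and each disjunct entails $\varphi$ by \cref{cor:theta-entails}.

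Next we check that each disjunct is primitive positive. Inspecting the recursion \eqref{eq:characteristic-formula} by induction on $k$, every $\chi_t^{m,k}$ is a conjunction of atoms (with empty conjunctions read as $\true$) and existentially quantified characteristic formulas of lower rank. No disjunction is ever introduced, so each $\Theta_{\A_i}^{n}$ is built from atoms and $\true$ using only $\wedge$ and $\exists$. The only disjunction is the outer one in $\psi_\varphi$.

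Finally, we describe the canonical structure. For each $i$, \cref{lem:canonical-query} shows that $\C_{\A_i}^{n}$ is a canonical structure for $\Theta_{\A_i}^{n}$: a structure satisfies the sentence exactly when it receives a homomorphism from $\C_{\A_i}^{n}$. The domain of $\C_{\A_i}^{n}$ is the finite forest $T_{\A_i}^{n}$, which has depth $n$ and hence height at most $n$; its finiteness was noted after the construction. By \cref{def:canonical-structure}, or equivalently \cref{prop:break} applied to a single copy, every relation tuple $R(v_1,\ldots,v_r)$ satisfies $v_1\prec\cdots\prec v_r$, so it lies on a consecutive downward chain.

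There is no real obstacle here. The only point needing care is the choice of the structures $\A_i$: they are selected from the finite set $\mathcal T_\varphi$, one per type code, which uses the fact that $\chi_t^{0,n}$ depends only on $t$ and not on the structure realising it.
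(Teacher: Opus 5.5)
Your proposal is correct and follows the paper's proof essentially verbatim. You reuse $\mathcal T_\varphi$ and $\psi_\varphi$ from the proof of \cref{thm:hpt-main} and pick one model per root type code, so that $\psi_\varphi$ literally becomes $\Theta_{\A_1}^n\vee\cdots\vee\Theta_{\A_s}^n$; you then read the primitive-positive form and the forest and chain properties off \cref{def:characteristic-formula} and \cref{def:canonical-structure}, exactly as the paper does (your recap of the two entailments via \cref{lem:chi-basic}(i) and \cref{cor:theta-entails} just restates the theorem's own argument).
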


\begin{proof}
The unsatisfiable case is one of the stated alternatives.  Otherwise,
let $\mathcal T_{\varphi}$ and $\psi_{\varphi}$ be as in the proof of
\cref{thm:hpt-main}.  Enumerate
$\mathcal T_{\varphi}=\{t_1,\ldots,t_s\}$ and choose
$\A_i\models\varphi$ with $t_i=\tp_{\A_i}^{n}(\epsilon)$ for every $i$.  The
normal form~\eqref{eq:forest-normal-form} then gives
\[
  \psi_{\varphi}
  =\Theta_{\A_1}^{n}\vee\cdots\vee\Theta_{\A_s}^{n},
\]
which is equivalent to $\varphi$.  By \cref{def:characteristic-formula}, each
disjunct has the stated primitive-positive form.  For every $i$, the
construction of $\C_{\A_i}^{n}$ in \cref{sec:canonical} uses a finite forest
of height at most $n$, and \cref{def:canonical-structure} ensures that every
relation tuple lies on a consecutive downward chain.
\end{proof}

For quantifier rank $0$, the corresponding normal form is simply $\true$ or
$\false$; no non-empty canonical forest is required.

\subsection{The finite-structure version}\label{sec:finite-hpt}

We now prove the equirank result over finite structures.

\begin{thm}[Finite homomorphism preservation]\label{thm:finite-hpt}
Let $\varphi$ be a sentence in $\FL[0](\sigma)$, and put
$n:=\qr(\varphi)$.  The following are
equivalent, with preservation and equivalence taken over finite
$\sigma$-structures:
\begin{enumerate}[label=\textup{(\roman*)}]
  \item $\varphi$ is preserved under homomorphisms;
  \item $\varphi$ is equivalent to some $\psi\in\EPFL[0](\sigma)$ with
  $\qr(\psi)\leq n$.
\end{enumerate}
\end{thm}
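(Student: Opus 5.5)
The plan is to rerun the proof of \cref{thm:hpt-main} with every structure involved kept finite. The direction \textup{(ii)}$\Rightarrow$\textup{(i)} is immediate from \cref{lem:ep-hom}, since that lemma holds for arbitrary homomorphisms and hence for those between finite structures. For \textup{(i)}$\Rightarrow$\textup{(ii)}, the cases $n=0$ and ``$\varphi$ has no finite model'' are handled as before, by $\true$ or $\false$. So assume $n\geq1$ and that $\varphi$ is finitely satisfiable.

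The first step is a finite analogue of \cref{lem:canonical-in-class}. Suppose $\A$ is finite and $\A\models\varphi$, where $\varphi$ is preserved under homomorphisms between finite structures. We show $\C_{\A}^{n}\models\varphi$ along the same transfer chain $\A\to\A^{\dagger}\equiv_n^{\FL}\D\to\C_{\A}^{n}$. The structures to check are the following.
\begin{itemize}
\item $\C_{\A}^{n}$ is finite; this was already noted after its construction, even for infinite $\A$.
\item $\D$ is a union of $n+1$ copies of $\C_{\A}^{n}$, so it is finite.
\item $\A^{\dagger}=\A\uplus\D$ is finite because $\A$ is.
\end{itemize}
The injection $\A\to\A^{\dagger}$ and the fold $\D\to\C_{\A}^{n}$ are therefore homomorphisms between finite structures, so finite homomorphism preservation applies to both. \Cref{lem:reset} is a purely combinatorial statement about arbitrary structures, so it transfers $\varphi$ from $\A^{\dagger}$ to $\D$ unchanged. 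This is the only point needing care: every application of the preservation hypothesis must involve only finite structures, and that is why we start from a finite $\A$.

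The second step is the finite analogue of \cref{cor:theta-entails}: every finite $\B$ with $\B\models\Theta_{\A}^{n}$ satisfies $\varphi$. \Cref{lem:canonical-query} gives a homomorphism $\C_{\A}^{n}\to\B$. Both structures are finite, so the previous step and the hypothesis yield $\B\models\varphi$.

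Finally, set
\[
  \mathcal T^{\mathrm{fin}}_{\varphi}
  :=
  \bigl\{\tp_{\A}^{n}(\epsilon)\mid \A\text{ finite},\ \A\models\varphi\bigr\},
\]
which is finite and non-empty by \cref{lem:type-finite}, and let $\psi:=\bigvee_{t\in\mathcal T^{\mathrm{fin}}_{\varphi}}\chi_t^{0,n}$. Then $\psi\in\EPFL[0](\sigma)$ and $\qr(\psi)\leq n$.
\begin{itemize}
\item If $\B$ is finite and $\B\models\varphi$, then $\tp_{\B}^{n}(\epsilon)\in\mathcal T^{\mathrm{fin}}_{\varphi}$, and \cref{lem:chi-basic}(i) gives $\B\models\psi$.
\item If $\B$ is finite and $\B\models\chi_t^{0,n}$, pick a finite $\A\models\varphi$ with $t=\tp_{\A}^{n}(\epsilon)$. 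Then $\chi_t^{0,n}=\Theta_{\A}^{n}$, and the second step gives $\B\models\varphi$.
\end{itemize}
This establishes equivalence over finite structures.

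I expect no real obstacle beyond the finiteness bookkeeping in the first step. Choosing witnesses $\A$ only among finite models is what keeps $\A^{\dagger}$ finite, and it costs nothing because the disjunction ranges only over type codes realised in finite models.
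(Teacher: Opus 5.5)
Your proposal is correct and follows the paper's proof essentially step for step. You use the same finite transfer chain $\A\to\A^{\dagger}\equiv_n^{\FL}\D\to\C_{\A}^{n}$ starting from a finite model $\A$, the same finite characteristic entailment via \cref{lem:canonical-query}, and the same disjunction over rank-$n$ root type codes realised in finite models of $\varphi$.
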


\begin{proof}
For \textup{(ii)}$\Rightarrow$\textup{(i)}, let $\psi$ witness
\textup{(ii)} and let $h:\A\to\B$ be a homomorphism between finite structures.
If $\A\models\varphi$, then $\A\models\psi$, hence $\B\models\psi$, and
therefore $\B\models\varphi$ by \cref{lem:ep-hom}.

For \textup{(i)}$\Rightarrow$\textup{(ii)}, suppose that $\varphi$ is
preserved under homomorphisms between finite $\sigma$-structures.  If $n=0$,
then $\varphi$ is equivalent to $\true$ or $\false$.  If $\varphi$ is
unsatisfiable over finite structures, it is equivalent to $\false$ there.  We
may therefore assume that $n\geq1$ and that $\varphi$ is satisfiable over
finite structures.

As in the proof over all structures, it remains to establish characteristic
entailment and then collect the finitely many root type codes.  We also need
to check that every structure in the transfer argument remains finite.

Fix a finite $\A\models\varphi$, put $\C:=\C_{\A}^{n}$, let $\D$ be the
disjoint union of $n+1$ copies of $\C$, and put
$\A^{\dagger}:=\A\uplus\D$.  Every structure in the transfer chain
\[
   \A\longrightarrow\A^{\dagger}
   \equiv_n^{\FL}\D
   \longrightarrow\C
\]
is finite: $\C$ is finite by construction, $\D$ has finitely many copies, and
$\A^{\dagger}=\A\uplus\D$.  The first and last arrows are the canonical
injection and fold homomorphisms, while the middle equivalence is
\cref{lem:reset}.  Since $\qr(\varphi)=n$, the chain gives
$\C\models\varphi$ using preservation only between finite structures.

If a finite $\B$ satisfies $\Theta_{\A}^{n}$, then
\cref{lem:canonical-query} gives a homomorphism $\C\to\B$.  Both structures
are finite, so homomorphism preservation yields $\B\models\varphi$.  Thus
$\Theta_{\A}^{n}$ entails $\varphi$ over finite structures.

Now let
\[
  \mathcal T_{\varphi}^{\mathrm{fin}}
  :=
  \bigl\{
    \tp_{\A}^{n}(\epsilon)
    \mid
    \A\text{ is a finite $\sigma$-structure satisfying }\varphi
  \bigr\}.
\]
By \cref{lem:type-finite} and finite satisfiability of $\varphi$,
$\mathcal T_{\varphi}^{\mathrm{fin}}$ is finite and non-empty.  Define
\[
  \psi_{\varphi}^{\mathrm{fin}}
  :=
  \bigvee_{t\in\mathcal T_{\varphi}^{\mathrm{fin}}}\chi_t^{0,n}.
\]
By \cref{def:characteristic-formula}, each disjunct belongs to
$\EPFL[0](\sigma)$ and has quantifier rank at most $n$.  Hence
$\psi_{\varphi}^{\mathrm{fin}}\in\EPFL[0](\sigma)$ and
$\qr(\psi_{\varphi}^{\mathrm{fin}})\leq n$.

If a finite $\B$ satisfies $\varphi$, then
$\tp_{\B}^{n}(\epsilon)\in\mathcal T_{\varphi}^{\mathrm{fin}}$, and
\cref{lem:chi-basic}(i) gives
$\B\models\chi_{\tp_{\B}^{n}(\epsilon)}^{0,n}$.  Hence
$\B\models\psi_{\varphi}^{\mathrm{fin}}$.

Conversely, if a finite $\B$ satisfies $\psi_{\varphi}^{\mathrm{fin}}$, then
$\B\models\chi_t^{0,n}$ for some
$t\in\mathcal T_{\varphi}^{\mathrm{fin}}$.  Choose a finite
$\A\models\varphi$ with $t=\tp_{\A}^{n}(\epsilon)$.  Since
$\chi_t^{0,n}=\Theta_{\A}^{n}$, the characteristic entailment proved above
gives $\B\models\varphi$.  Thus $\varphi$ and
$\psi_{\varphi}^{\mathrm{fin}}$ are equivalent over finite structures.
\end{proof}

\section{Failure of the \L o\'s--Tarski theorem}\label{sec:los-tarski}

In this section, we construct a rank-three fluted sentence $\Phi$ over one
binary relation that is preserved under extensions but has no existential
fluted equivalent, either over all structures or over finite structures.  We
first define $\Phi$ using row labels and profiles.  A structural
characterisation of $\neg\Phi$ then yields extension preservation, while
simulations of every rank between two finite structures rule out existential
fluted definability.

\subsection{A fluted sentence \texorpdfstring{\(\Phi\)}{Phi}}
\label{sec:counterexample-formula}

Throughout this section, let \(\sigma=\{R\}\), where \(R\) is binary.  The
counterexample below uses only the variables
\(x_1,x_2,x_3\), in this fluted order.  For
\(m\in\{2,3\}\), the only binary atom in \(\FL[m](\sigma)\) is
\(R(x_{m-1},x_m)\); hence the occurrences below are \(R(x_1,x_2)\) in
\(\FL[2](\sigma)\)-formulas and \(R(x_2,x_3)\) in
\(\FL[3](\sigma)\)-formulas.  For \(\varepsilon\in\{0,1\}\),
\(R^\varepsilon(x_{m-1},x_m)\) abbreviates \(R(x_{m-1},x_m)\) if
\(\varepsilon=1\), and \(\neg R(x_{m-1},x_m)\) if \(\varepsilon=0\).

For a \(\sigma\)-structure \(\C\) and \(c\in C\), the
\emph{\(R\)-row of \(c\) in \(\C\)} is the function
\(\rho_c^\C:C\to\{0,1\}\), where \(\rho_c^\C(d)=1\) iff
\(R^\C(c,d)\).  The row is \emph{zero}, \emph{one}, or \emph{mixed}
according as \(\rho_c^\C\) is constantly \(0\), constantly \(1\), or takes
both values.

The following \(\FL[2](\sigma)\)-formulas express the three exhaustive and mutually exclusive \(R\)-row behaviours:
\begin{align*}
  \Zt(x_2)&:=\forall x_3\,\neg R(x_2,x_3),\\
  \Ot(x_2)&:=\forall x_3\,R(x_2,x_3),\\
  \Mt(x_2)&:=
    \exists x_3\,R(x_2,x_3)
    \wedge
   \exists x_3\,\neg R(x_2,x_3).
\end{align*}
We also use the corresponding \(\FL[1](\sigma)\)-formula
\[
  \Mt(x_1):=\exists x_2\,R(x_1,x_2)
    \wedge
  \exists x_2\,\neg R(x_1,x_2).
\]
The symbols \(\Zt,\Ot,\Mt\) without displayed variables are labels.  For a
label \(T\), \(T(x_2)\) denotes the corresponding
\(\FL[2](\sigma)\)-formula among
\(\Zt(x_2),\Ot(x_2),\Mt(x_2)\).  The only label also used with variable
\(x_1\) is \(\Mt\), via the \(\FL[1](\sigma)\)-formula \(\Mt(x_1)\)
above.  Since the formulas \(T(x_2)\) have only \(x_2\) free, we write
\(\C\models T(x_2)[c]\) for satisfaction under the one-coordinate assignment
\(x_2\mapsto c\); equivalently, this abbreviates
\(\C\models T(x_2)[e,c]\) for any \(e\in C\).  We say that \(c\)
\emph{has label \(T\) in \(\C\)} when this holds.  Thus an element with label
\(\Zt,\Ot,\Mt\) has, respectively, a zero, one, or mixed \(R\)-row.  Since
structures are non-empty, every element has exactly one of these three labels.
We write
\[
  \lambda_{\C}:C\longrightarrow\Types
\]
for this three-valued label map.

For \(c\in C\), define its \emph{row profile} by
\begin{equation}
  \begin{array}{rcl}
  \prof_{\C}(c)
  &:=&\{(\varepsilon,T)\in\{0,1\}\times\Types\mid
  \exists d\in C\text{ of label }T\\
  &&\text{such that }\rho_c^\C(d)=\varepsilon\}.
  \end{array}
  \label{eq:profile}
\end{equation}

For \(\varepsilon\in\{0,1\}\) and a label \(T\), let
\begin{equation}
  E_{\varepsilon,T}(x_1)
  :=
  \exists x_2\bigl(R^\varepsilon(x_1,x_2)\wedge T(x_2)\bigr).
  \label{eq:E}
\end{equation}
By construction, for every \(c\in C\),
\[
  (\varepsilon,T)\in\prof_{\C}(c)
  \quad\Longleftrightarrow\quad
  \C\models E_{\varepsilon,T}(x_1)[c].
\]

Define
\begin{equation}
  \mathsf{Split}
  :=
  \bigvee_{T\in\Types}
  \exists x_1
  \bigl(E_{0,T}(x_1)\wedge E_{1,T}(x_1)\bigr),
  \label{eq:split}
\end{equation}
and
\begin{equation}
  \mathsf{Diverse}
  :=
  \bigvee_{\substack{\varepsilon\in\{0,1\}\\T\in\Types}}
  \bigl[\exists x_1(\Mt(x_1)\wedge E_{\varepsilon,T}(x_1))\wedge\exists x_1(\Mt(x_1)\wedge\neg E_{\varepsilon,T}(x_1))\bigr].
  \label{eq:diverse}
\end{equation}
Finally, define
\begin{equation}
  \Phi:=\mathsf{Split}\vee\mathsf{Diverse}.
  \label{eq:Phi}
\end{equation}
All disjunctions in~\eqref{eq:split} and~\eqref{eq:diverse} are finite, and every displayed abbreviation can be eliminated.

\begin{prop}
\label{prop:phi-syntax}
The sentence \(\Phi\) belongs to \(\FL[0](\sigma)\) and has quantifier rank
three.
\end{prop}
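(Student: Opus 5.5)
We verify the claim by unfolding the abbreviations bottom-up and checking at each stage the syntactic level and quantifier rank, using the inductive clauses of $\FL[m](\sigma)$.

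\emph{Level $3$ and $2$.} At level $3$ the only binary atom is $R(x_2,x_3)$, so $R(x_2,x_3)$ and $\neg R(x_2,x_3)$ lie in $\FL[3](\sigma)$ with rank $0$. Quantifying $x_3$ gives $\Zt(x_2)$, $\Ot(x_2)$ and both conjuncts of $\Mt(x_2)$ in $\FL[2](\sigma)$, each of rank $1$. At level $2$ the atom $R(x_1,x_2)$ is a suffix atom, so $R^\varepsilon(x_1,x_2)\in\FL[2](\sigma)$ with rank $0$. Hence $R^\varepsilon(x_1,x_2)\wedge T(x_2)\in\FL[2](\sigma)$ has rank $1$.

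\emph{Level $1$.} Quantifying $x_2$ gives $E_{\varepsilon,T}(x_1)\in\FL[1](\sigma)$ of rank $2$. Likewise $\Mt(x_1)$ is built from $R(x_1,x_2)$ and $\neg R(x_1,x_2)$ at level $2$ with $\exists x_2$, so it lies in $\FL[1](\sigma)$ with rank $1$. Booleans preserve level, so $E_{0,T}\wedge E_{1,T}$, $\Mt(x_1)\wedge E_{\varepsilon,T}(x_1)$ and $\Mt(x_1)\wedge\neg E_{\varepsilon,T}(x_1)$ all lie in $\FL[1](\sigma)$ with rank $2$.

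\emph{Level $0$.} Prefixing $\exists x_1$ yields sentences in $\FL[0](\sigma)$ of rank $3$. The finite disjunctions and conjunctions defining $\mathsf{Split}$, $\mathsf{Diverse}$ and $\Phi$ therefore lie in $\FL[0](\sigma)$ with rank the maximum, namely $3$. Any single disjunct, for instance $\exists x_1\exists x_2(R(x_1,x_2)\wedge\forall x_3\neg R(x_2,x_3))$ inside $\mathsf{Split}$, already attains rank exactly $3$, so $\qr(\Phi)=3$.

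The only delicate point is that each atom occurrence is a suffix atom at its actual level. The reuse of the symbols $\Mt$ at both $x_1$ and $x_2$ must be read as the two distinct formulas defined in the paper. The variable $x_1$, though not free in $T(x_2)$, is harmless, since the level-$m$ convention only requires free variables to form a suffix.
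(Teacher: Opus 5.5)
Your proposal is correct and follows the same inside-out verification as the paper. You place the label formulas in $\FL[2](\sigma)$ with rank one and the formulas $E_{\varepsilon,T}$ and $\Mt(x_1)$ in $\FL[1](\sigma)$ with ranks two and one, so the outer $\exists x_1$ gives rank three. One small nit: the formula you display is not literally a disjunct of $\mathsf{Split}$, whose disjuncts are $\exists x_1(E_{0,T}(x_1)\wedge E_{1,T}(x_1))$. This does not matter, because your bottom-up computation already yields exact ranks and hence $\qr(\Phi)=3$.
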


\begin{proof}
We verify the levels and ranks from the inside out.  The three label formulas
belong to \(\FL[2](\sigma)\), have only \(x_2\) free, and have quantifier rank one.
Thus \(R^\varepsilon(x_1,x_2)\wedge T(x_2)\) belongs to \(\FL[2](\sigma)\) and
has rank one, so \(E_{\varepsilon,T}\) belongs to \(\FL[1](\sigma)\), has only
\(x_1\) free, and has rank two.  The formula \(\Mt(x_1)\) belongs to
\(\FL[1](\sigma)\) and has rank one, while \(\neg E_{\varepsilon,T}\) belongs to
\(\FL[1](\sigma)\) and has rank two.  Hence every matrix quantified by \(x_1\) in
\eqref{eq:split} and~\eqref{eq:diverse} belongs to \(\FL[1](\sigma)\) and has
rank two.  The outer quantifiers yield fluted sentences of rank three, and
their finite Boolean combination \(\Phi\) has rank three.
\end{proof}

The definitions give \(\C\models\mathsf{Split}\) iff
\(\{(0,T),(1,T)\}\subseteq\prof_{\C}(c)\) for some \(c\in C\) and
\(T\in\Types\).  Likewise, \(\C\models\mathsf{Diverse}\) iff two elements of
label \(\Mt\) have different profiles.  Hence \(\neg\Phi\) says exactly that
no row splits a label class and that all elements of label \(\Mt\) have the
same profile.

The purpose of \(\mathsf{Diverse}\) is to make \(\neg\Phi\) stable under
induced substructures.  Under restriction, elements of label \(\Mt\) may
acquire label \(\Zt\) or \(\Ot\); if mixed rows have different profiles, this
relabelling can create a new split.  The failure of \(\mathsf{Diverse}\)
prevents this by forcing all mixed rows to have the same profile.

The second disjunct is necessary: \(\mathsf{Split}\) by itself is not preserved
under extensions.  Consider the structure with \(R\)-matrix
\[
\begin{pmatrix}
0&0&0&0\\
0&0&0&1\\
0&1&1&0\\
1&1&1&1
\end{pmatrix}.
\]
Its four elements have labels \(\Zt,\Mt,\Mt,\Ot\), respectively, and
\(\mathsf{Split}\) is false.  In the induced substructure on the first three
elements, the labels become \(\Zt,\Zt,\Mt\), and the third row takes both values
on the new \(\Zt\)-class.  Thus \(\mathsf{Split}\) becomes true.  The two
original \(\Mt\)-rows have different profiles, so the full structure satisfies
\(\mathsf{Diverse}\).  The example therefore violates preservation for
\(\mathsf{Split}\), but not for \(\Phi\).

\subsection{Preservation under extensions}
\label{sec:extension-preservation}

We prove extension preservation by showing that the class defined by
\(\neg\Phi\) is closed under induced substructures.  The following
characterisation provides the required structural form.

\begin{lem}[Label-matrix characterisation]
\label{lem:label-matrix}
For a \(\sigma\)-structure \(\C\), put
\[
  \mathcal L_{\C}:=
  \{\lambda_{\C}(c)\mid c\in C\}.
\]
The following are equivalent.
\begin{enumerate}[label=\textup{(\roman*)}]
  \item \(\C\models\neg\Phi\).
  \item There is a function
  \[
    \mu_{\C}:\mathcal L_{\C}\times\mathcal L_{\C}
    \longrightarrow\{0,1\}
  \]
  such that, for all \(c,d\in C\),
  \begin{equation}
    R^{\C}(c,d)
    \quad\Longleftrightarrow\quad
    \mu_{\C}\bigl(\lambda_{\C}(c),\lambda_{\C}(d)\bigr)=1.
    \label{eq:label-matrix}
  \end{equation}
\end{enumerate}
\end{lem}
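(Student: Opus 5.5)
We prove both directions directly from the semantic reading of \(\Phi\) recorded after \cref{prop:phi-syntax}: \(\C\models\neg\Phi\) holds iff no element's profile contains both \((0,T)\) and \((1,T)\) for any label \(T\), and all elements of label \(\Mt\) have the same profile.

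\emph{(i)\(\Rightarrow\)(ii).} Assume \(\C\models\neg\Phi\). For \(c\in C\) and \(T\in\mathcal L_{\C}\), there is some \(d\) of label \(T\), so at least one of \((0,T),(1,T)\) lies in \(\prof_{\C}(c)\); by failure of \(\mathsf{Split}\), exactly one does. Hence \(\rho_c^\C\) is constant on each label class, with value \(f_c(T)\in\{0,1\}\). It remains to show that \(f_c\) depends only on \(\lambda_{\C}(c)\), and we split by the label of \(c\).
\begin{itemize}
  \item If \(c\) has label \(\Zt\), then \(f_c\equiv0\).
  \item If \(c\) has label \(\Ot\), then \(f_c\equiv1\).
  \item If \(c\) has label \(\Mt\), then \(f_c\) is determined by \(\prof_{\C}(c)\), restricted to labels in \(\mathcal L_{\C}\). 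Failure of \(\mathsf{Diverse}\) makes this profile the same for all \(\Mt\)-elements.
\end{itemize}
Define \(\mu_{\C}(S,T):=f_c(T)\) for any \(c\) of label \(S\). This is well defined by the case split, and \eqref{eq:label-matrix} holds by construction.

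\emph{(ii)\(\Rightarrow\)(i).} Given \(\mu_{\C}\), each row \(\rho_c^\C\) satisfies \(\rho_c^\C(d)=\mu_{\C}(\lambda_{\C}(c),\lambda_{\C}(d))\). It is therefore constant on every label class, so no profile contains both \((0,T)\) and \((1,T)\), and \(\mathsf{Split}\) fails. Moreover \(\prof_{\C}(c)=\{(\mu_{\C}(\lambda_{\C}(c),T),T)\mid T\in\mathcal L_{\C}\}\) depends only on \(\lambda_{\C}(c)\), so any two \(\Mt\)-elements have equal profiles. Thus no \(E_{\varepsilon,T}\) distinguishes them, \(\mathsf{Diverse}\) fails, and \(\C\models\neg\Phi\).

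The only delicate point is in (i)\(\Rightarrow\)(ii): we must observe that for \(T\in\mathcal L_{\C}\) at least one of the pair \((0,T),(1,T)\) occurs in the profile, so that the value \(f_c(T)\) is defined. This is what lets equality of profiles among \(\Mt\)-elements pin down the rows completely. It uses non-emptiness of label classes in \(\mathcal L_{\C}\), which holds by definition, and also that the \(\Zt\)/\(\Ot\) rows are constant by their labels.
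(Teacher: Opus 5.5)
Your proposal is correct and follows the paper's proof essentially step for step. Failure of \(\mathsf{Split}\) makes each row constant on every realised label class, dependence on the source label is checked by the \(\Zt\), \(\Ot\), \(\Mt\) case split with failure of \(\mathsf{Diverse}\) handling the \(\Mt\) case, and the converse reads off \(\prof_{\C}(c)=\{(\mu_{\C}(\lambda_{\C}(c),T),T)\mid T\in\mathcal L_{\C}\}\).
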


\begin{proof}
Assume \textup{(i)}.  We first show that each row is constant on each
realised target-label class, and then that its values depend only on the
source label.  For \(c\in C\) and \(T\in\mathcal L_{\C}\), the failure
of \(\mathsf{Split}\) gives a unique value \(v_c(T)\in\{0,1\}\) such that
\[
  \rho_c^{\C}(d)=v_c(T)
  \qquad\text{whenever }\lambda_{\C}(d)=T.
\]
Here existence uses the fact that \(T\) is realised.  If \(c\) has label
\(\Zt\), then \(v_c\) is constantly \(0\), and if it has label \(\Ot\), then
\(v_c\) is constantly \(1\).  If \(c,c'\) have label \(\Mt\), the failure of
\(\mathsf{Diverse}\) gives
\(\prof_{\C}(c)=\prof_{\C}(c')\).  For every \(e\in C\),
\[
  \prof_{\C}(e)
  =\{(v_e(T),T)\mid T\in\mathcal L_{\C}\}.
\]
It follows that \(v_c=v_{c'}\).  Thus \(v_c\) depends only on the label of
\(c\).

For \(S,T\in\mathcal L_{\C}\), choose any \(c\) of label \(S\) and define
\(\mu_{\C}(S,T):=v_c(T)\).  The preceding paragraph shows that this is
well defined, and the definition of \(v_c(T)\) gives
\eqref{eq:label-matrix}.  Hence \textup{(ii)} holds.

Conversely, assume \textup{(ii)} and let \(c\) have label \(S\).  Then
\eqref{eq:label-matrix} gives
\[
  \prof_{\C}(c)
  =\{(\mu_{\C}(S,T),T)\mid T\in\mathcal L_{\C}\}.
\]
This profile contains exactly one of \((0,T)\) and \((1,T)\) for every
realised label \(T\), and neither pair for an unrealised label.  Hence
\(\mathsf{Split}\) is false.  Moreover, all elements of label \(\Mt\) have
the same profile, so \(\mathsf{Diverse}\) is false.  Therefore
\(\C\models\neg\Phi\), proving \textup{(i)}.
\end{proof}

Thus every model of \(\neg\Phi\) is a blow-up of a possibly looped directed
graph on at most three vertices, indexed by the realised labels
\(\Zt,\Ot,\Mt\): each block of its \(R\)-matrix is constant.  The next lemma
shows that this form survives restriction, because the \(\Zt\)- and
\(\Ot\)-classes are fixed while the old \(\Mt\)-class either remains mixed or
changes label as a whole.

\begin{lem}
\label{lem:hereditary}
The class of structures satisfying \(\neg\Phi\) is closed under induced substructures.
\end{lem}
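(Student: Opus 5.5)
We will use the label-matrix characterisation in \cref{lem:label-matrix} in both directions.  Let \(\C\models\neg\Phi\) and let \(\B\subseteq\C\) be an induced substructure, with non-empty domain \(B\subseteq C\).  By \cref{lem:label-matrix}\textup{(i)}\(\Rightarrow\)\textup{(ii)}, there is a function \(\mu_{\C}\) on \(\mathcal L_{\C}\times\mathcal L_{\C}\) satisfying~\eqref{eq:label-matrix}.  The goal is to produce a function \(\mu_{\B}\) on \(\mathcal L_{\B}\times\mathcal L_{\B}\) that satisfies~\eqref{eq:label-matrix} for \(\B\).  The converse direction \textup{(ii)}\(\Rightarrow\)\textup{(i)} of \cref{lem:label-matrix} will then give \(\B\models\neg\Phi\).

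The key step is to show that the new label \(\lambda_{\B}(b)\) depends only on the old label \(\lambda_{\C}(b)\).  We do this by finding a map \(f:\lambda_{\C}[B]\to\Types\) with \(\lambda_{\B}(b)=f(\lambda_{\C}(b))\) for every \(b\in B\).  For \(b\in B\) with \(\lambda_{\C}(b)=S\), the row \(\rho_b^{\B}\) is the restriction of \(\rho_b^{\C}\) to \(B\).  By~\eqref{eq:label-matrix}, this restriction takes the value \(\mu_{\C}(S,T)\) on each element of \(B\) whose \(\C\)-label is \(T\).  Hence the set of values of \(\rho_b^{\B}\) is \(\{\mu_{\C}(S,T)\mid T\in\lambda_{\C}[B]\}\), which depends only on \(S\).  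Non-emptiness of \(B\) ensures this set is non-empty.  So the \(\B\)-label of \(b\) is determined by \(S\): it is \(\Zt\), \(\Ot\) or \(\Mt\) according as this value set is \(\{0\}\), \(\{1\}\) or \(\{0,1\}\).  This is precisely the remark before the lemma: the \(\Zt\)- and \(\Ot\)-classes keep their labels, while the old \(\Mt\)-class either stays mixed or changes label as a whole.

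The step I expect to be the main obstacle is defining \(\mu_{\B}\), because \(f\) need not be injective.  For example, old \(\Zt\) elements and old \(\Mt\) elements may both become \(\Zt\) in \(\B\).  In that case we must check that rows agree across the merged classes.  Fix \(S',T'\in\mathcal L_{\B}\), and let \(b,b'\in B\) have \(\B\)-label \(S'\) and \(d,d'\in B\) have \(\B\)-label \(T'\).  We need \(R^{\C}(b,d)\Leftrightarrow R^{\C}(b',d')\).

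If \(S'=\Zt\) or \(S'=\Ot\), the whole \(\B\)-row of \(b\) and of \(b'\) is constant with value fixed by \(S'\), so the value is forced regardless of the old labels.  If \(S'=\Mt\), then both \(b\) and \(b'\) had old label \(\Mt\), since old \(\Zt\) and old \(\Ot\) rows remain constant on \(B\).  So \(\mu_{\C}(\Mt,\cdot)\) governs both rows, and it remains to show \(\mu_{\C}(\Mt,\lambda_{\C}(d))=\mu_{\C}(\Mt,\lambda_{\C}(d'))\).

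Because the labels \(T'\) of \(d\) and \(d'\) agree in \(\B\), I would finish with a short case check, using that \(b\)'s \(\B\)-row is evaluated exactly on \(B\).  If \(\lambda_{\C}(d)=\lambda_{\C}(d')\), equality is immediate.  Otherwise, at least one of \(d,d'\) changed label; say \(d\) had old label \(\Mt\) and new label \(\Zt\) or \(\Ot\).  If its new label is \(\Zt\), then \(d\)'s \(\B\)-row is constantly \(0\), so \(\mu_{\C}(\Mt,T)=0\) for every \(T\in\lambda_{\C}[B]\).  In particular \(b\)'s \(\B\)-row is constantly \(0\), contradicting \(S'=\Mt\).  The new label \(\Ot\) gives the symmetric contradiction.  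Hence merged classes never carry differing values within a mixed row, \(\mu_{\B}(S',T')\) is well defined, and \(\B\models\neg\Phi\) follows.
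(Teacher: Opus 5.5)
Your proof is correct and follows essentially the paper's route. You transfer the label matrix from the larger structure via \cref{lem:label-matrix} in both directions, using that the $\Zt$- and $\Ot$-labels persist while the old $\Mt$-class changes label as a whole. The only difference is organisational. The paper splits on whether any element of label $\Mt$ survives in the substructure: if none does, it defines $\mu$ from the source label alone; otherwise it restricts the old matrix. You instead check well-definedness of $\mu_{\B}$ directly by splitting on the new source label, and this amounts to the same two cases.
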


\begin{proof}
Let \(\C\subseteq\D\) and assume \(\D\models\neg\Phi\).  By
\cref{lem:label-matrix}, fix a label matrix \(\mu_{\D}\) for \(\D\).  We
need to construct a label matrix for \(\C\).  We first determine how
restriction can change the labels; this will determine the required matrix.

If \(c\in C\) has label \(\Zt\) in \(\D\), its row remains constantly
zero in \(\C\), so it retains label \(\Zt\).  Likewise, every element of
label \(\Ot\) in \(\D\) retains label \(\Ot\) in \(\C\).  Put
\[
  C_{\Mt}^{\D}:=\{c\in C\mid \lambda_{\D}(c)=\Mt\}.
\]
Since \(\C\) is induced in \(\D\), for all
\(c,c'\in C_{\Mt}^{\D}\) and \(d\in C\),
\[
  \rho_c^{\C}(d)
  =\rho_c^{\D}(d)
  =\mu_{\D}\bigl(\Mt,\lambda_{\D}(d)\bigr)
  =\rho_{c'}^{\D}(d)
  =\rho_{c'}^{\C}(d).
\]
Thus all elements of \(C_{\Mt}^{\D}\) have the same row in \(\C\) and, when
\(C_{\Mt}^{\D}\neq\varnothing\), acquire a common label
\(T_{\Mt}\in\Types\).

We now construct the required matrix in two cases.  If
\(C_{\Mt}^{\D}=\varnothing\), or if
\(C_{\Mt}^{\D}\neq\varnothing\) and
\(T_{\Mt}\in\{\Zt,\Ot\}\), then \(\C\) has no element of
label \(\Mt\).  Every row is therefore constantly zero or constantly one,
according to its source label.  Hence the function
\[
  \mu_{\C}(U,V):=
  \begin{cases}
    0,&U=\Zt,\\
    1,&U=\Ot
  \end{cases}
  \qquad(U,V\in\mathcal L_{\C})
\]
satisfies~\eqref{eq:label-matrix}, and
\cref{lem:label-matrix} gives \(\C\models\neg\Phi\).

If \(C_{\Mt}^{\D}\neq\varnothing\) and \(T_{\Mt}=\Mt\), every element has
the same label in \(\C\) as in \(\D\).  Consequently
\(\mathcal L_{\C}\subseteq\mathcal L_{\D}\), and
\[
  \mu_{\C}:=
  \mu_{\D}|_{\mathcal L_{\C}\times\mathcal L_{\C}}
\]
satisfies~\eqref{eq:label-matrix} for \(\C\).  Again
\cref{lem:label-matrix} yields \(\C\models\neg\Phi\).  The two cases exhaust
all possibilities.
\end{proof}

\begin{thm}
\label{thm:extension-preserved}
The sentence \(\Phi\) is preserved under extensions.
\end{thm}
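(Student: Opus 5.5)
The plan is to derive the theorem directly from \cref{lem:hereditary} by contraposition. Preservation under extensions means: whenever $\C\subseteq\D$ and $\C\models\Phi$, also $\D\models\Phi$. Equivalently, whenever $\C\subseteq\D$ and $\D\models\neg\Phi$, also $\C\models\neg\Phi$. This second formulation is exactly the statement that the class of models of $\neg\Phi$ is closed under induced substructures.

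Concretely, we fix $\sigma$-structures $\C\subseteq\D$ with $\C\models\Phi$ and suppose, for contradiction, that $\D\not\models\Phi$, so that $\D\models\neg\Phi$. Since $\C$ is an induced substructure of $\D$ in the sense of \cref{sec:fluted-fragment}, \cref{lem:hereditary} applies and gives $\C\models\neg\Phi$. This contradicts $\C\models\Phi$, so $\D\models\Phi$.

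We also check that nothing in this argument depends on finiteness. \Cref{lem:label-matrix,lem:hereditary} were proved for arbitrary (non-empty) $\sigma$-structures. The same argument therefore gives preservation over all structures and, by restriction, over finite structures.

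There is no real obstacle at this point: the substantive work, namely that restriction can only collapse the $\Mt$-class uniformly into a single label, is already contained in \cref{lem:hereditary}. The only thing to confirm is that the notion of extension used in the definition of preservation coincides with the induced-substructure relation $\subseteq$ fixed in the preliminaries, which it does by definition.
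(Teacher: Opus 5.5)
Your proposal is correct and matches the paper's proof: the paper also derives extension preservation by contraposition from \cref{lem:hereditary}, which says that the class of models of $\neg\Phi$ is closed under induced substructures. Your remark that the argument does not depend on finiteness is also how the paper uses this theorem in \cref{cor:finite-los-tarski}.
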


\begin{proof}
We prove the claim by contraposition.  If \(\C\subseteq\D\) and
\(\D\models\neg\Phi\), then \cref{lem:hereditary} gives
\(\C\models\neg\Phi\).  Hence \(\C\models\Phi\) implies
\(\D\models\Phi\).
\end{proof}

\subsection{Existential fluted formulas do not define \texorpdfstring{\(\Phi\)}{Phi}}
\label{sec:los-tarski-separation}

We now apply the simulation principle from
\cref{prop:existential-transfer}.  We give two finite structures which will
witness the failure of existential definability: \(\A\) will satisfy
\(\Phi\), while \(\B\) will not, but existential fluted formulas will
transfer from \(\A\) to \(\B\) along rank-bounded fluted simulations.  Let
\[
  A=\{p,q,r\},
  \qquad
  R^{\A}=\{(p,q)\},
\]
and
\[
  B=\{u,v\},
  \qquad
  R^{\B}=\{(u,v)\}.
\]
The corresponding \(R\)-matrices are
\[
\begin{array}{cccc}
 R^{\A} & p&q&r\\ \midrule
 p&0&1&0\\
 q&0&0&0\\
 r&0&0&0
\end{array}
\qquad\qquad
\begin{array}{ccc}
 R^{\B} & u&v\\ \midrule
 u&0&1\\
 v&0&0
\end{array}.
\]

The separating pair isolates the information that these simulations are
allowed to forget.  In \(\A\), the element \(p\) distinguishes \(q\) and
\(r\), although \(q,r\) have the same row label \(\Zt\): indeed, they have
the same zero row.  In \(\B\), the corresponding positive and negative
continuations from \(u\) lead to elements of different row labels:
\(R(u,v)\) with \(v\) of label \(\Zt\), and \(\neg R(u,u)\) with \(u\) of
label \(\Mt\).  Thus \(\B\) reproduces the local \(R/\neg R\) choices
available from \(p\), but not the fact that the two choices in \(\A\) end in
the same row class.

The point is not that existential fluted formulas cannot detect both an
\(R\)-successor and a non-\(R\)-successor from one source; they can.  What
they cannot certify in this example is that the two witnesses both have zero
rows, since the formula defining the label \(\Zt\) uses universal
information.

\begin{lem}
\label{lem:Phi-values}
We have \(\A\models\Phi\) and \(\B\not\models\Phi\).
\end{lem}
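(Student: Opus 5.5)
The plan is to compute the row labels and row profiles in both structures directly and then read off $\Phi$ through the semantic descriptions of $\mathsf{Split}$ and $\mathsf{Diverse}$ given after \cref{prop:phi-syntax}. No simulation argument is needed here; everything reduces to inspecting the two $R$-matrices.

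For $\A$, the row of $p$ is $(0,1,0)$, which is mixed, so $\lambda_{\A}(p)=\Mt$. The rows of $q$ and $r$ are constantly zero, so both have label $\Zt$. Now look at the row of $p$ on the $\Zt$-class $\{q,r\}$: it has $\rho_p^{\A}(q)=1$ and $\rho_p^{\A}(r)=0$. Hence $(0,\Zt)$ and $(1,\Zt)$ both lie in $\prof_{\A}(p)$, so $\A\models\mathsf{Split}$ with witness $c=p$ and $T=\Zt$. Therefore $\A\models\Phi$.

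For $\B$, the row of $u$ is $(0,1)$, which is mixed, and the row of $v$ is $(0,0)$, which is zero. So $\mathcal L_{\B}=\{\Mt,\Zt\}$, with the $\Mt$-class equal to $\{u\}$ and the $\Zt$-class equal to $\{v\}$. The cleanest way to conclude is to invoke \cref{lem:label-matrix}(ii). Define
\[
\mu_{\B}(\Mt,\Mt)=0,\quad \mu_{\B}(\Mt,\Zt)=1,\quad \mu_{\B}(\Zt,\cdot)=0.
\]
Checking the four entries of $R^{\B}$ against~\eqref{eq:label-matrix} shows that this matrix works, since each label class is a singleton and the block matrix is simply the $R$-matrix itself. \Cref{lem:label-matrix} then gives $\B\models\neg\Phi$.

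One can also verify this directly. $\mathsf{Split}$ fails because every label class in $\B$ is a singleton, so no row can take both values on one class. $\mathsf{Diverse}$ fails because there is only one element of label $\Mt$, so no two $\Mt$-elements can have different profiles.

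The only delicate step is getting the labels in $\B$ right. The element $u$ is mixed only because of the self-pair $\neg R(u,u)$ together with $R(u,v)$. It is easy to misclassify $u$ if the diagonal is overlooked, so I will state the row of $u$ explicitly, including the diagonal entry.
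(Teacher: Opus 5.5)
Your proof is correct and essentially the paper's. The $\A$ half is identical: $p$ splits the $\Zt$-class $\{q,r\}$. Your direct check for $\B$ is the paper's argument, except that the paper lists $\prof_{\B}(u)$ and $\prof_{\B}(v)$ explicitly rather than observing that both label classes are singletons.

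Routing $\B\models\neg\Phi$ through \cref{lem:label-matrix}(ii) is also legitimate, since that lemma is proved earlier and independently of this one. For a two-element structure, though, it amounts to the same direct check.
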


\begin{proof}
We first verify the positive instance.  In \(\A\), we have \(\A\models\Zt(x_2)[q]\) and \(\A\models\Zt(x_2)[r]\), while
\[
  R^{\A}(p,q)
  \quad\text{and}\quad
  \neg R^{\A}(p,r).
\]
Thus \(\A\models\mathsf{Split}\).

We next verify the negative instance.  In \(\B\), the element \(u\) has
label \(\Mt\), the element \(v\) has label \(\Zt\), and the label \(\Ot\)
is not realised.  Their profiles are
\[
  \prof_{\B}(u)=\{(0,\Mt),(1,\Zt)\},
  \qquad
  \prof_{\B}(v)=\{(0,\Mt),(0,\Zt)\}.
\]
Each profile contains at most one of \((0,T)\) and \((1,T)\) for every
label \(T\), so \(\mathsf{Split}\) is false.  Moreover, \(u\) is the only
element of label \(\Mt\), so \(\mathsf{Diverse}\) is false.  Hence
\(\B\not\models\Phi\).
\end{proof}

To transfer existential fluted sentences of arbitrary rank, it remains to
construct a rank-$k$ fluted simulation from \(\A\) to \(\B\) for every
$k\geq0$.  Let
\[
  S:=\{(p,u),(q,u),(r,u),(q,v),(r,v)\}\subseteq A\times B.
\]
The map \(\iota\) below handles the first move of the simulation:
\[
  \iota(p)=u,
  \qquad
  \iota(q)=\iota(r)=v.
\]
After a non-empty history, the following table handles each subsequent move.
For a current pair \((a,b)\in S\) and a next element \(a'\in A\), it specifies
the matching element of \(B\):
\begin{equation}
\begin{array}{cccc}
  \delta((a,b),a')&a'=p&a'=q&a'=r\\ \midrule
  (p,u)&u&v&u\\
  (q,u)&u&u&u\\
  (r,u)&u&u&u\\
  (q,v)&u&v&v\\
  (r,v)&u&v&v
\end{array}
\label{eq:transition-table}
\end{equation}
Denote the table entry by \(\delta((a,b),a')\).

The first row of~\eqref{eq:transition-table} contains the essential choices.
After \(p\) has been matched with \(u\), the continuation \(q\) is matched
with \(v\) to preserve the edge, whereas \(r\) is matched with \(u\) to
preserve the non-edge.  Thus the two \(\Zt\)-elements \(q,r\) need not be
matched with elements of the same row label in \(\B\).  The remaining rows
ensure that this history-dependent matching can continue indefinitely.

\begin{lem}
\label{lem:transition}
For every \((a,b)\in S\) and \(a'\in A\), if
\[
  b':=\delta((a,b),a'),
\]
then \((a',b')\in S\) and
\begin{equation}
  R^{\A}(a,a')
  \quad\Longleftrightarrow\quad
  R^{\B}(b,b').
  \label{eq:edge-match}
\end{equation}
\end{lem}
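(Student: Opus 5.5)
This is a finite check: fifteen cases, one for each of the five pairs \((a,b)\in S\) and each of the three choices of \(a'\in A\). I will organise the verification by the row of~\eqref{eq:transition-table} instead of listing all fifteen separately.

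First I record the relations. \(R^{\A}=\{(p,q)\}\), so \(R^{\A}(a,a')\) holds only for \(a=p\), \(a'=q\). Similarly, \(R^{\B}(b,b')\) holds only for \(b=u\), \(b'=v\).

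\emph{Membership in \(S\).} Every table entry in the column \(a'=p\) is \(u\), and \((p,u)\in S\). Every entry in the columns \(a'=q,r\) is \(u\) or \(v\). Moreover, \((q,u),(q,v),(r,u),(r,v)\) all lie in \(S\). So \((a',b')\in S\) holds in every case.

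\emph{Edge condition~\eqref{eq:edge-match}.} I split on \(b\).
\begin{itemize}
  \item If \(b=v\), i.e.\ \((a,b)\in\{(q,v),(r,v)\}\), then the right side is false because \(v\) has a zero row. The left side is also false because \(a\neq p\).
  \item If \((a,b)\in\{(q,u),(r,u)\}\), the left side is false since \(a\neq p\). The table gives \(b'=u\) throughout these rows, and \(\neg R^{\B}(u,u)\), so the right side is false too.
  \item If \((a,b)=(p,u)\), the left side holds exactly when \(a'=q\). The first row gives \(b'=v\) precisely for \(a'=q\), and \(R^{\B}(u,b')\) holds exactly when \(b'=v\). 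So the two sides agree.
\end{itemize}

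The only row needing real care is \((p,u)\), which is where the table was designed to separate the edge \(R(p,q)\) from the non-edge \(\neg R(p,r)\). Every other row reduces to both sides being false.
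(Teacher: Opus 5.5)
Your proof is correct and follows essentially the same finite case check as the paper. For membership in \(S\) you group the table entries by column, where the paper lists all fifteen resulting pairs. For the edge condition you split on the row, where the paper simply notes that \((b,b')=(u,v)\) arises only in row \((p,u)\), column \(a'=q\), that is, exactly when \((a,a')=(p,q)\).
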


\begin{proof}
We verify closure in \(S\) and edge matching separately.  For each of the
five possible current pairs in \(S\), the entries in the corresponding row
of~\eqref{eq:transition-table} give pairs \((a',b')\) that again belong to
\(S\):
\[
\begin{array}{cccc}
  (a,b)&a'=p&a'=q&a'=r\\ \midrule
  (p,u)&(p,u)&(q,v)&(r,u)\\
  (q,u)&(p,u)&(q,u)&(r,u)\\
  (r,u)&(p,u)&(q,u)&(r,u)\\
  (q,v)&(p,u)&(q,v)&(r,v)\\
  (r,v)&(p,u)&(q,v)&(r,v).
\end{array}
\]
This proves the membership assertion.

For edge matching, recall that \((p,q)\) is the only pair in \(R^\A\) and
\((u,v)\) is the only pair in \(R^\B\).  The table produces
\((b,b')=(u,v)\) exactly in the row \((a,b)=(p,u)\) and column \(a'=q\),
that is, exactly when \((a,a')=(p,q)\).  This proves
\eqref{eq:edge-match}.
\end{proof}

We now use the transition table to build a relation on corresponding finite
histories.
Let
\begin{align*}
 Z:={}&\{(\epsilon,\epsilon)\}\\
 &{}\cup\bigl\{((a_1,\ldots,a_m),(b_1,\ldots,b_m))\mid m\geq1,
       \ (a_i,b_i)\in S\text{ for every }i,\\
 &\hspace{18mm}
       R^{\A}(a_i,a_{i+1})\leftrightarrow R^{\B}(b_i,b_{i+1})
       \text{ for every }i<m\bigr\}.
\end{align*}
Thus membership in $Z$ records exactly the atomic information visible along
the consecutive pairs of a fluted history.

\begin{lem}
\label{lem:concrete-simulation}
For every $k\geq0$, setting $Z_j:=Z$ for $0\leq j\leq k$ yields a rank-$k$
fluted simulation from \(\A\) to \(\B\).  In particular,
$(\A,\epsilon)\preccurlyeq_k^{\FL}(\B,\epsilon)$.
\end{lem}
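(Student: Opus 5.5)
We verify directly that the constant sequence $Z_j:=Z$ satisfies the three requirements of \cref{def:fluted-simulation}: non-emptiness of $Z_k$ together with $(\epsilon,\epsilon)\in Z_k$, atomic harmony for every pair in $Z$, and the forth condition from $Z_{j+1}=Z$ into $Z_j=Z$. Since all levels coincide, the index $j$ plays no role, and the same argument works uniformly for every $k$. Non-emptiness is immediate because $(\epsilon,\epsilon)\in Z$ by definition.

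For atomic harmony, fix a pair $((a_1,\ldots,a_m),(b_1,\ldots,b_m))\in Z$. Since $\sigma=\{R\}$ with $R$ binary, the only current atom at level $m$ is $R(x_{m-1},x_m)$, available when $m\geq2$; for $m\leq1$ there is nothing to check, as there are no unary or nullary symbols. For $m\geq2$ the required equivalence $R^{\A}(a_{m-1},a_m)\leftrightarrow R^{\B}(b_{m-1},b_m)$ is exactly the case $i=m-1$ of the defining condition of $Z$.

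For the forth condition, take $(\bar a,\bar b)\in Z$ of length $m$ and $a'\in A$. If $m=0$, respond with $b':=\iota(a')$; the pairs $(p,u),(q,v),(r,v)$ all lie in $S$, and there is no consecutive pair to check, so $(a',b')\in Z$. If $m\geq1$, the last pair $(a_m,b_m)$ lies in $S$, and we respond with $b':=\delta((a_m,b_m),a')$. By \cref{lem:transition}, $(a',b')\in S$ and $R^{\A}(a_m,a')\leftrightarrow R^{\B}(b_m,b')$. All earlier consecutive conditions are inherited from $(\bar a,\bar b)\in Z$, so $(\bar a a',\bar b b')\in Z$.

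The substantive work, closure of $S$ under the table together with edge matching, has already been done in \cref{lem:transition}. The only point needing care is to check that membership in $Z$ depends on consecutive pairs only, so that appending one element requires checking just the new last pair. This is precisely what the fluted suffix discipline permits, since atoms never look beyond the final two coordinates. The final clause follows from $(\epsilon,\epsilon)\in Z_k$.
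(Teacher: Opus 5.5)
Your proposal is correct and follows the paper's proof essentially step by step: atomic harmony comes from the $i=m-1$ adjacency clause in the definition of $Z$, the forth condition uses $\iota$ for the empty history and $\delta$ together with \cref{lem:transition} otherwise, and the final clause is the root pair $(\epsilon,\epsilon)\in Z_k$. One small point of wording: atoms here never look beyond the final two coordinates because $\sigma=\{R\}$ has only a binary symbol, not because of the fluted suffix discipline in general.
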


\begin{proof}
We verify atomic harmony, the forth condition, and the root pair.  There are
no $R$-atoms at levels $0$ and $1$.
If $(\bar a,\bar b)\in Z$ has length $m\geq2$, the only current fluted atom
over $\sigma$ is $R(x_{m-1},x_m)$.  The last condition in the definition of
$Z$, with $i=m-1$, gives
\[
  R^{\A}(a_{m-1},a_m)
  \quad\Longleftrightarrow\quad
  R^{\B}(b_{m-1},b_m),
\]
which is precisely atomic harmony.

It remains to prove the forth condition.  Let $(\bar a,\bar b)\in Z$ and choose $c\in A$.
If $\bar a=\epsilon$, put $d:=\iota(c)$.  Then
$(c,d)\in S$, and no adjacency condition is required for a one-element
history; hence $((c),(d))\in Z$.

Suppose now that $\bar a$ is non-empty, and let $(a',b')$ be the last
coordinate pair of $(\bar a,\bar b)$.  Membership in $Z$ gives
$(a',b')\in S$.  Put
\[
  d:=\delta((a',b'),c).
\]
By \cref{lem:transition}, $(c,d)\in S$ and
$R^{\A}(a',c)\leftrightarrow R^{\B}(b',d)$.  Every earlier coordinate pair
and every earlier adjacency condition is inherited from $(\bar a,\bar b)$.
Therefore $(\bar a c,\bar b d)\in Z$.  Thus the constant sequence
$(Z_0,\ldots,Z_k)$ satisfies atomic harmony and the forth condition for every
$k\geq0$.
Moreover, $(\epsilon,\epsilon)\in Z=Z_k$, so $Z_k$ is non-empty and
$(\A,\epsilon)\preccurlyeq_k^{\FL}(\B,\epsilon)$.
\end{proof}

\begin{thm}
\label{thm:los-tarski-failure}
The rank-three sentence \(\Phi\in\FL[0](\sigma)\) is preserved under
extensions but has no existential fluted equivalent over all structures.
Consequently, the fragment-internal \L o\'s--Tarski theorem fails already for
the single-binary-relation signature \(\sigma=\{R\}\).
\end{thm}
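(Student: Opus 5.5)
The proof assembles the three ingredients established above.  Membership $\Phi\in\FL[0](\sigma)$ and quantifier rank three are given by \cref{prop:phi-syntax}.  Preservation under extensions is \cref{thm:extension-preserved}.  It therefore remains to show that no existential fluted sentence is equivalent to $\Phi$ over all $\sigma$-structures.

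We argue by contradiction.  Suppose $\psi\in\EFL[0](\sigma)$ satisfies $\models\Phi\leftrightarrow\psi$, and put $k:=\qr(\psi)$.

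\begin{enumerate}
  \item By \cref{lem:Phi-values}, $\A\models\Phi$, so $\A\models\psi$.
  \item \Cref{lem:concrete-simulation} supplies a rank-$k$ fluted simulation from $\A$ to $\B$ with $(\epsilon,\epsilon)\in Z_k$, that is, $(\A,\epsilon)\preccurlyeq_k^{\FL}(\B,\epsilon)$.
  \item \Cref{prop:existential-transfer}, applied at level $m=0$ with the empty tuples, transfers $\psi$ to give $\B\models\psi$.
  \item Hence $\B\models\Phi$, contradicting \cref{lem:Phi-values}.
\end{enumerate}

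Only the fixed-rank simulation for $k=\qr(\psi)$ is needed here, not all ranks at once.  All substantive work sits in the cited lemmas; the only point to check is that the simulation in \cref{lem:concrete-simulation} starts from the root pair $(\epsilon,\epsilon)$, which it does.

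The final sentence of the theorem is then immediate.  The fragment-internal \L o\'s--Tarski theorem would assert that every $\FL$ sentence preserved under extensions has an existential fluted equivalent, and $\Phi$ is a counterexample over the signature $\{R\}$.

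Since $\A$ and $\B$ are both finite, the same argument gives non-definability over finite structures as well.  That is the subject of the subsequent corollary, once preservation is checked in the finite setting, which is immediate because \cref{lem:hereditary} holds for arbitrary structures.
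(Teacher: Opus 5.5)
Your proposal is correct and follows the paper's proof essentially verbatim. Membership and rank come from \cref{prop:phi-syntax}, and extension preservation from \cref{thm:extension-preserved}. Non-definability is by contradiction with $k=\qr(\psi)$, using \cref{lem:Phi-values}, the rank-$k$ simulation of \cref{lem:concrete-simulation}, and existential transfer at the empty tuple. Your closing remark on finite structures matches the argument given for \cref{cor:finite-los-tarski}.
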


\begin{proof}
Membership and quantifier rank follow from \cref{prop:phi-syntax}, and
extension preservation follows from \cref{thm:extension-preserved}.  It
remains to rule out existential fluted definability.  Suppose, towards a
contradiction, that an existential fluted sentence \(\psi\) is equivalent to
\(\Phi\), and put $k:=\qr(\psi)$.  By \cref{lem:Phi-values},
\(\A\models\Phi\), and hence \(\A\models\psi\).  The rank-$k$ simulation
from \cref{lem:concrete-simulation} and existential transfer
\cref{prop:existential-transfer} then give \(\B\models\psi\).  The assumed
equivalence would imply \(\B\models\Phi\), contradicting
\cref{lem:Phi-values}.
\end{proof}

\begin{cor}
\label{cor:finite-los-tarski}
Over finite structures, \(\Phi\) is preserved under extensions but has no
existential fluted equivalent.
\end{cor}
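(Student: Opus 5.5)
The corollary follows by rerunning the proof of \cref{thm:los-tarski-failure} inside the class of finite structures. Both halves of that proof already work in this setting, so the only task is to check that no step used an infinite structure.

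For extension preservation, take finite $\sigma$-structures $\C\subseteq\D$ with $\C\models\Phi$. \Cref{thm:extension-preserved} was proved over all structures, so it gives $\D\models\Phi$ directly. Equivalently, \cref{lem:hereditary} shows that the models of $\neg\Phi$ are closed under induced substructures, and this holds in particular when $\D$ is finite. Preservation over all structures therefore implies preservation over finite structures, because the finite case is a special case.

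For non-definability, suppose an existential fluted sentence $\psi$ agrees with $\Phi$ on all finite $\sigma$-structures, and put $k:=\qr(\psi)$. The witnesses $\A$ and $\B$ have three and two elements, so both are finite. By \cref{lem:Phi-values}, $\A\models\Phi$, and hence $\A\models\psi$, since the two sentences agree on the finite structure $\A$. \Cref{lem:concrete-simulation} gives $(\A,\epsilon)\preccurlyeq_k^{\FL}(\B,\epsilon)$, and \cref{prop:existential-transfer} then yields $\B\models\psi$. Since $\B$ is also finite, the assumed finite equivalence gives $\B\models\Phi$, contradicting \cref{lem:Phi-values}.

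The only point that needs care is that the assumed equivalence is used only at the two finite structures $\A$ and $\B$. Existential transfer is a statement about a fixed pair of structures, so it needs no compactness and remains valid when equivalence holds only over finite structures. Nothing else in the argument depends on infinite models.
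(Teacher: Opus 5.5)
Your proposal is correct and follows the paper's own argument: preservation over finite structures is inherited from \cref{thm:extension-preserved}, and non-definability uses the finite witnesses $\A$, $\B$ together with \cref{lem:concrete-simulation}, \cref{prop:existential-transfer} and \cref{lem:Phi-values}. Your remark that the assumed equivalence is needed only at these two finite structures is exactly why the argument carries over to the finite setting.
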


\begin{proof}
Preservation follows from \cref{thm:extension-preserved}.  The structures
\(\A\) and \(\B\) are finite, and
\cref{lem:concrete-simulation,prop:existential-transfer} show that every
existential fluted sentence true in \(\A\) is true in \(\B\), whereas
\cref{lem:Phi-values} gives \(\A\models\Phi\) and \(\B\not\models\Phi\).
Thus no existential fluted sentence is equivalent to \(\Phi\) over finite
structures.
\end{proof}

\section{Failure of weak Beth definability}\label{sec:wbdp}
This section refutes weak Beth definability by giving a strong implicit
definition of a binary relation $R$ over the unary base signature $\{P\}$.
The unique expansion interprets $R$ as $P^{\A}\times P^{\A}$.  However, no
formula in $\FL[2](\{P\})$ can define this relation, because over a unary
signature its truth depends only on the final coordinate.  A two-element
base structure witnesses non-definability both over all structures and over
finite structures.

We begin by fixing the notions of implicit and explicit fluted definability,
including the distinction between weak and strong implicit definitions.

\begin{defi}[Implicit and explicit fluted definitions]\label{def:wbdp}
Let $\tau$ be a base signature, let $S\notin\tau$ be an $r$-ary relation
symbol, let $T\subseteq\FL[0](\tau)$, and let
\[
   \Gamma(S)\subseteq\FL[0](\tau\cup\{S\}).
\]
\begin{enumerate}[label=\textup{(\roman*)}]
  \item $\Gamma(S)$ is a \emph{weak implicit definition} of $S$ relative to
  $T$ if, for every $\tau$-structure $\A\models T$, there is at most one
  relation $S^{\ast}\subseteq A^r$ such that
  \[
      (\A,S^{\ast})\models\Gamma(S).
  \]
  It is a \emph{strong implicit definition} if, for every
  $\tau$-structure $\A\models T$, there is exactly one such relation.
  \item $S$ is \emph{explicitly fluted-definable} relative to
  $T\cup\Gamma(S)$ if there is a formula
  \[
      \delta\in\FL[r](\tau)
  \]
  such that every $(\A,S^{\ast})\models T\cup\Gamma(S)$ satisfies, for all $\bar a\in A^r$,
  \[
       S^{\ast}(\bar a)
       \quad\Longleftrightarrow\quad
       \A\models\delta[\bar a].
  \]
\end{enumerate}
The fragment has the \emph{Beth definability property} (also called the
\emph{strong} or \emph{ordinary Beth definability property}) if, for every
choice of $\tau,S,T$, and $\Gamma(S)$ as above, every weak implicit definition
has an explicit fluted definition.  It has the \emph{weak Beth definability
property} if, for every such choice, every strong implicit definition has an
explicit fluted definition.
\end{defi}

Following the terminology of Andr\'eka and
N\'emeti~\cite[Definition~2]{AndrekaNemeti2021}, the weak Beth property requires explicit
definability only for strong implicit definitions, which have both existence
and uniqueness.  The ordinary Beth property also covers weak implicit
definitions, which require uniqueness alone.  Thus the ordinary Beth property
implies the weak Beth property.  In the finite versions, implicit
definability is required only over finite base structures and explicit
definability only over finite models of $T\cup\Gamma(S)$.

To refute both properties, it therefore suffices to give a strong implicit
definition with no explicit fluted definition.  We now construct one over the
unary base signature $\{P\}$.

Let the base signature be $\tau=\{P\}$ with $P$ unary, and let $R$ be a new
binary symbol.  Put
\begin{align}
 \Sigma(P,R):={}&\forall x_1\Bigl[
   \bigl(P(x_1)\rightarrow
      \forall x_2\bigl(R(x_1,x_2)\leftrightarrow P(x_2)\bigr)\bigr)
   \notag\\[-1mm]
   &\hspace{34mm}\wedge
   \bigl(\neg P(x_1)\rightarrow
      \forall x_2\neg R(x_1,x_2)\bigr)
 \Bigr].
 \label{eq:sigma-beth}
\end{align}

\begin{lem}\label{lem:sigma-fluted}
The sentence $\Sigma(P,R)$ belongs to $\FL[0](\{P,R\})$, uses only
$x_1,x_2$, and has quantifier rank two.
\end{lem}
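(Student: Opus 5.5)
The verification is a syntactic check from the inside out, in the same style as \cref{prop:phi-syntax}; we track the level and free variables of each subformula. The formula uses $\rightarrow$ and $\leftrightarrow$, which are not primitive connectives of $\FL$. We therefore first read $\alpha\rightarrow\beta$ as $\neg\alpha\vee\beta$ and $\alpha\leftrightarrow\beta$ as $(\neg\alpha\vee\beta)\wedge(\neg\beta\vee\alpha)$. These expansions use only negation, conjunction and disjunction, so they preserve both the level and the quantifier rank.

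At level $2$, the permitted atoms over $\{P,R\}$ are the suffix atoms $P(x_2)$ and $R(x_1,x_2)$. Hence $R(x_1,x_2)\leftrightarrow P(x_2)$ and $\neg R(x_1,x_2)$ belong to $\FL[2](\{P,R\})$ and have quantifier rank $0$. Applying the quantifier rule at level $1$ gives $\forall x_2(R(x_1,x_2)\leftrightarrow P(x_2))$ and $\forall x_2\neg R(x_1,x_2)$ in $\FL[1](\{P,R\})$, each of rank $1$.

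At level $1$ the atom $P(x_1)$ is a permitted suffix atom. So the two implications, and their conjunction forming the matrix of the outer quantifier, lie in $\FL[1](\{P,R\})$ and have rank $\max\{0,1\}=1$. The outer $\forall x_1$ then yields a member of $\FL[0](\{P,R\})$ of rank $2$. Only $x_1$ and $x_2$ occur anywhere in the formula.

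The one point needing care is that the subformulas are correctly assigned to levels. In particular, $P(x_1)$ must be treated at level $1$, before $x_2$ is introduced, rather than at level $2$, where $P(x_1)$ would not be a suffix atom. The nesting in \eqref{eq:sigma-beth} places it exactly there, so the check goes through. The rank is exactly two, not merely at most two, because there is a chain of two nested quantifiers, $\forall x_1$ followed by $\forall x_2$.
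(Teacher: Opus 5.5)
Your proposal is correct and follows essentially the same inside-out check as the paper. $R(x_1,x_2)$ and $P(x_2)$ are suffix atoms at level $2$; quantifying $x_2$ lands at level $1$, where $P(x_1)$ is permitted; and the outer $\forall x_1$ yields a rank-two sentence in $\FL[0](\{P,R\})$. Your explicit unfolding of $\rightarrow$ and $\leftrightarrow$ into $\neg,\wedge,\vee$ just spells out what the paper notes in passing.
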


\begin{proof}
We verify the fluted levels and quantifier rank from the inside out.  At level
$2$, both
$R(x_1,x_2)$ and $P(x_2)$ are permitted suffix atoms.  Hence
\[
  R(x_1,x_2)\leftrightarrow P(x_2)
  \quad\text{and}\quad
  \neg R(x_1,x_2)
\]
belong to $\FL[2](\{P,R\})$; expanding the biconditional uses only Boolean
connectives.  Quantifying $x_2$ puts the two resulting formulas in
$\FL[1](\{P,R\})$.  At that level they can be combined with the permitted
atom $P(x_1)$ and its negation to form the two implications in
\eqref{eq:sigma-beth}.  Finally, quantifying $x_1$ gives a sentence in
$\FL[0](\{P,R\})$.  Each branch has two nested quantifiers, so the quantifier
rank is two.
\end{proof}

\begin{prop}[Existence and uniqueness]\label{prop:unique-expansion}
Let $\A$ be any structure over $\{P\}$.  There is exactly one binary relation
$R^{\ast}$ such that
\[
  (\A,R^{\ast})\models\Sigma(P,R).
\]
It is
\[
   R^{\ast}=P^{\A}\times P^{\A}.
\]
\end{prop}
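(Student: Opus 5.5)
The plan is to verify existence and uniqueness separately, unfolding the semantics of $\Sigma(P,R)$ from \eqref{eq:sigma-beth}. Because $\Sigma(P,R)$ is a sentence of $\FL[0](\{P,R\})$, its satisfaction in $(\A,R^\ast)$ is governed by the usual Tarskian clauses. At level $1$, $P(x_1)$ is evaluated at $x_1$. At level $2$, both $R(x_1,x_2)$ and $P(x_2)$ are suffix atoms, so they are evaluated at the pair $(a,b)$ and at $b$ respectively. Hence $(\A,R^\ast)\models\Sigma(P,R)$ holds exactly when the following two conditions hold for every $a\in A$. If $a\in P^{\A}$, then for all $b\in A$ we have $R^\ast(a,b)\Leftrightarrow b\in P^{\A}$. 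If $a\notin P^{\A}$, then for all $b\in A$ we have $\neg R^\ast(a,b)$.

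For existence, I will check that $R^\ast:=P^{\A}\times P^{\A}$ satisfies both conditions. For $a\in P^{\A}$, the pair $(a,b)$ lies in $P^{\A}\times P^{\A}$ iff $b\in P^{\A}$, which is the first condition. For $a\notin P^{\A}$, no pair $(a,b)$ lies in $P^{\A}\times P^{\A}$, which is the second condition.

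For uniqueness, suppose $(\A,R')\models\Sigma(P,R)$ and fix an arbitrary $(a,b)\in A^2$. I will split into two cases according to whether $a\in P^{\A}$. If $a\in P^{\A}$, the first condition gives $R'(a,b)\Leftrightarrow b\in P^{\A}\Leftrightarrow (a,b)\in P^{\A}\times P^{\A}$. If $a\notin P^{\A}$, the second condition gives $\neg R'(a,b)$, and $(a,b)\notin P^{\A}\times P^{\A}$ as well. Thus $R'=P^{\A}\times P^{\A}$.

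No step is a real obstacle. The only point needing care is the first one: confirming that in the fluted evaluation convention the atom $P(x_2)$ at level $2$ refers to the second coordinate, and that $P(x_1)$ at level $1$ refers to the quantified $x_1$. This follows directly from the suffix-evaluation convention in \cref{sec:fluted-fragment}, and \cref{lem:sigma-fluted} ensures that the formula is well-formed at each of these levels. The argument does not use finiteness of $\A$, so it gives the strong implicit definition both over all structures and over finite ones.
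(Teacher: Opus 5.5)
Your proposal is correct and follows the paper's proof essentially step for step. You unfold the two implications of $\Sigma(P,R)$ into conditions on each $a\in A$ (whether or not $a\in P^{\A}$), use the same case split to force $R^{\ast}=P^{\A}\times P^{\A}$ for uniqueness, and check that this relation satisfies both conditions for existence. The paper simply treats uniqueness before existence, and it leaves implicit the suffix-evaluation convention that you spell out.
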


\begin{proof}
We first prove uniqueness and identify the only possible relation.  Suppose
$(\A,R^{\ast})\models\Sigma(P,R)$.  If $a\in P^{\A}$, the first implication in
\eqref{eq:sigma-beth} gives, for every $b\in A$,
\[
  R^{\ast}(a,b)\quad\Longleftrightarrow\quad b\in P^{\A}.
\]
If $a\notin P^{\A}$, the second implication gives
\[
  \neg R^{\ast}(a,b)
\]
for every $b\in A$.  Therefore
\[
  R^{\ast}(a,b)
  \quad\Longleftrightarrow\quad
  a\in P^{\A}\ \text{and}\ b\in P^{\A},
\]
so $R^{\ast}=P^{\A}\times P^{\A}$.

It remains to prove existence.  Set $R^{\ast}=P^{\A}\times P^{\A}$.  For
every $b\in A$, if $a\in P^{\A}$, then
$R^{\ast}(a,b)$ holds exactly when $b\in P^{\A}$; if
$a\notin P^{\A}$, then $R^{\ast}(a,b)$ is false.  Hence the expansion
$(\A,R^{\ast})$ satisfies $\Sigma(P,R)$.
\end{proof}

Thus the singleton theory $\{\Sigma(P,R)\}$ is a strong implicit definition
of $R$ relative to the empty base theory.  To rule out an explicit definition,
we next show that a fluted formula over a unary signature at level $m$ can
depend only on its final coordinate.

\begin{lem}[Last-coordinate invariance]\label{lem:last-coordinate}
Let $\tau$ be a unary relational signature, let $m\geq1$, and let
$\theta\in\FL[m](\tau)$.  For every $\tau$-structure $\A$ and all tuples
\[
   (a_1,\ldots,a_m),\ (b_1,\ldots,b_m)\in A^m
\]
with $a_m=b_m$, one has
\[
  \A\models\theta[a_1,\ldots,a_m]
  \quad\Longleftrightarrow\quad
  \A\models\theta[b_1,\ldots,b_m].
\]
In other words, the truth of a fluted formula over a unary signature at level
$m$ depends only on its last coordinate.
\end{lem}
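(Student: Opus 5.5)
The plan is to derive the lemma from bisimulation invariance (\cref{prop:bisimulation-invariance}) by exhibiting, for every $k$, a rank-$k$ fluted bisimulation between $(\A,\bar a)$ and $(\A,\bar b)$. A direct structural induction on $\theta$ would also work, but the bisimulation route reuses the existing machinery and handles all quantifier ranks uniformly.

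Since $\tau$ is unary, the only atoms at level $p\geq1$ are $P(x_p)$ for $P\in\tau$. At level $0$ there are no atoms at all, because $\tau$ has no nullary symbols. The truth of every atom at a tuple therefore depends only on its last entry. For this reason I would take, for every $j\leq k$,
\[
  Z_j := \{(\bar c,\bar d)\mid |\bar c|=|\bar d|=p\geq1,\ c_p=d_p\}\cup\{(\epsilon,\epsilon)\},
\]
so that the same relation is used at every index.

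The three bisimulation conditions then need to be checked. \emph{Atomic harmony} holds because any current atom is $P(x_p)$, evaluated at $c_p=d_p$; for the pair $(\epsilon,\epsilon)$ the condition is vacuous. For the \emph{forth} condition, given $(\bar c,\bar d)\in Z_{j+1}$ and $c'\in A$, answer with $d':=c'$; the extended tuples end in the same element, so $(\bar c c',\bar d d')\in Z_j$. The \emph{back} condition is symmetric. Finally, $(\bar a,\bar b)\in Z_k$ holds because $m\geq1$ and $a_m=b_m$.

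With this bisimulation in hand, \cref{prop:bisimulation-invariance} with $k:=\qr(\theta)$ yields the claimed equivalence. No step is a real obstacle. The one point to state carefully is that appending the same element after different prefixes keeps the pair in $Z$: the prefixes are never inspected, because unary atoms only look at the last coordinate.
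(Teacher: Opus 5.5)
Your proposal is correct and matches the paper's proof: the paper uses the same constant relation $Z$ (the empty pair together with all equal-length non-empty tuple pairs sharing their last entry) at every index up to $\qr(\theta)$. It verifies atomic harmony and the forth and back conditions by appending the same element on both sides, then concludes by \cref{prop:bisimulation-invariance}.
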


\begin{proof}
Fix $\theta\in\FL[m](\tau)$ and put $q:=\qr(\theta)$.  We prove the claim by
constructing a rank-$q$ self-bisimulation of $\A$ that relates tuples with the
same last coordinate.  Define
\[
  Z:=\{(\epsilon,\epsilon)\}
     \cup
     \bigl\{((c_1,\ldots,c_\ell),(d_1,\ldots,d_\ell))\mid
       \ell\geq1\text{ and }c_\ell=d_\ell\bigr\}.
\]
For every $0\leq j\leq q$, put $Z_j:=Z$.

We first verify atomic harmony.  In a unary signature, every permitted atom
at level $\ell\geq1$ has the form $U(x_\ell)$.  For a pair in $Z$, the two last
coordinates are equal, so $U$ has the same truth value on both tuples.  At
level $0$ there are no atoms.

For the forth and back conditions, given any pair in $Z$ and any $e\in A$,
append $e$ on both sides.  The resulting tuples again have the same last
coordinate and therefore form a pair in $Z$.  Thus
$(Z_0,\ldots,Z_q)$ is a rank-$q$ fluted bisimulation between $\A$ and itself.
Since $(\epsilon,\epsilon)\in Z=Z_q$, its top relation is non-empty.

Finally, the tuples in the statement have the same last coordinate, so their
pair belongs to $Z_q$.  Rank-bounded bisimulation invariance
(\cref{prop:bisimulation-invariance}) gives the required equivalence for
$\theta$.
\end{proof}

\begin{thm}\label{thm:wbdp-proof}
Over all structures, the relation symbol $R$ has no explicit fluted definition
relative to the singleton theory $\{\Sigma(P,R)\}$.
\end{thm}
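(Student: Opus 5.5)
The proof will be by contradiction, using \cref{lem:last-coordinate} on a single small base structure.

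Suppose some $\delta\in\FL[2](\{P\})$ explicitly defines $R$ relative to $\{\Sigma(P,R)\}$. Take the two-element base structure $\A$ with $A=\{a,b\}$ and $P^{\A}=\{a\}$.

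By \cref{prop:unique-expansion}, the unique expansion satisfying $\Sigma(P,R)$ is $(\A,R^{\ast})$ with $R^{\ast}=P^{\A}\times P^{\A}=\{(a,a)\}$. Since this expansion is a model of $\{\Sigma(P,R)\}$, explicit definability gives, for all $c,d\in A$,
\[
  R^{\ast}(c,d)\quad\Longleftrightarrow\quad\A\models\delta[c,d].
\]
In particular, $\A\models\delta[a,a]$ because $(a,a)\in R^{\ast}$. Also $\A\not\models\delta[b,a]$ because $(b,a)\notin R^{\ast}$.

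Now apply \cref{lem:last-coordinate} with $\tau=\{P\}$ and $m=2$. The tuples $(a,a)$ and $(b,a)$ have the same last coordinate, so $\A\models\delta[a,a]$ if and only if $\A\models\delta[b,a]$. This contradicts the previous paragraph, so no such $\delta$ exists.

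There is no real obstacle. The only care needed is in applying \cref{def:wbdp}(ii) correctly. The defining formula lives in $\FL[2](\tau)$ over the base signature $\tau=\{P\}$ alone, so \cref{lem:last-coordinate} applies to it. The semantics at level $2$ is evaluation at the full pair $[c,d]$, and the lemma handles every suffix of free variables.

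Since $\A$ is finite, the same argument also gives the finite version.
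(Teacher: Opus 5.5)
Your proposal is correct and is essentially the paper's proof: the same two-element base structure with one $P$-element (yours labelled $a,b$ instead of $1,0$), the unique expansion $R^{\ast}=P^{\A}\times P^{\A}$ from \cref{prop:unique-expansion}, and a contradiction from \cref{lem:last-coordinate} applied to two tuples with the same last coordinate that disagree on $R^{\ast}$. Your closing remark on the finite version also matches how the paper derives \cref{cor:wbdp-finite}.
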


\begin{proof}
We now combine the unique expansion with last-coordinate invariance.  Assume,
towards a contradiction, that some
\[
  \delta\in\FL[2](\{P\})
\]
explicitly defines $R$ in all models of $\Sigma(P,R)$.

To force a contradiction, we choose a base structure in which the uniquely
defined relation distinguishes two tuples having the same second coordinate.
Let $\A$ have domain $A=\{0,1\}$ and interpret
\[
   P^{\A}=\{1\}.
\]
By \cref{prop:unique-expansion}, the unique expansion satisfying
$\Sigma(P,R)$ has
\[
   R^{\ast}=\{(1,1)\}.
\]
Consequently,
\[
   R^{\ast}(0,1)\ \text{is false},
   \qquad
   R^{\ast}(1,1)\ \text{is true}.
\]
Explicit definability would therefore require
\[
  \A\not\models\delta[0,1]
  \qquad\text{and}\qquad
  \A\models\delta[1,1].
\]
But the tuples $(0,1)$ and $(1,1)$ have the same last coordinate.  By
\cref{lem:last-coordinate},
\[
  \A\models\delta[0,1]
  \quad\Longleftrightarrow\quad
  \A\models\delta[1,1],
\]
contradicting the assumption that $\delta$ defines $R^{\ast}$.
\end{proof}

\begin{cor}\label{cor:wbdp-finite}
Both the weak and ordinary Beth definability properties fail for $\FL$, over
all structures and over finite structures.
\end{cor}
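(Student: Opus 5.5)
The plan is to exhibit one strong implicit definition that has no explicit fluted definition, and to check that this single witness works in every setting the statement covers. Take $\tau=\{P\}$, the empty base theory $T=\varnothing$, and $\Gamma(R)=\{\Sigma(P,R)\}$. By \cref{lem:sigma-fluted}, $\Sigma(P,R)\in\FL[0](\{P,R\})$. By \cref{prop:unique-expansion}, every $\{P\}$-structure, finite or infinite, has exactly one expansion satisfying $\Sigma(P,R)$. Hence $\{\Sigma(P,R)\}$ is a strong implicit definition of $R$ over all structures. Since the argument applies in particular to finite base structures, it is also a strong implicit definition in the finite version.

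For the case over all structures, \cref{thm:wbdp-proof} shows that $R$ has no explicit fluted definition relative to $\{\Sigma(P,R)\}$. So the weak Beth property fails. The ordinary Beth property also fails: a strong implicit definition is in particular a weak one, since existence plus uniqueness implies at most one expansion. This matches the remark after \cref{def:wbdp} that the ordinary property implies the weak one.

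For the finite version, note that the counterexample in the proof of \cref{thm:wbdp-proof} uses the two-element base structure with $A=\{0,1\}$ and $P^{\A}=\{1\}$, together with its unique expansion $R^{\ast}=\{(1,1)\}$. This expansion is a finite model of $\Sigma(P,R)$. Suppose some $\delta\in\FL[2](\{P\})$ defined $R$ explicitly over finite models. It would then have to separate $(0,1)$ from $(1,1)$ in this finite structure. But \cref{lem:last-coordinate} holds for arbitrary structures, so $\delta$ takes the same truth value on these two tuples, since they share their last coordinate. Thus no explicit definition exists over finite structures either, and both Beth properties fail there by the same implication.

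No step here is a real obstacle; the proof only assembles earlier results. The one point to check carefully is that the finite versions of the two properties are set up exactly as in the remark after \cref{def:wbdp}: implicit definability is quantified over finite base structures, and explicit definability only over finite models of $T\cup\Gamma(R)$. With that reading, the finite witness structure above is enough.
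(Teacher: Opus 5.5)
Your proposal is correct and follows the paper's proof. It uses the same witness $\{\Sigma(P,R)\}$ and the same two-element finite base structure, and it combines \cref{lem:sigma-fluted}, \cref{prop:unique-expansion}, \cref{thm:wbdp-proof} and \cref{lem:last-coordinate} with the observation that every strong implicit definition is also a weak one.
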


\begin{proof}
The same witness works in both semantics.  By
\cref{lem:sigma-fluted,prop:unique-expansion}, $\{\Sigma(P,R)\}$ is a strong
implicit definition over all structures, and hence over finite structures.
The proof of \cref{thm:wbdp-proof} uses a finite two-element base structure,
and \cref{lem:last-coordinate} gives the same contradiction when only finite
models are considered.  Thus the weak Beth definability property fails over
both classes of structures.

In either semantics, every strong implicit definition is also weak, since
``exactly one'' entails ``at most one''.  The same witness therefore refutes
the ordinary Beth definability property.
\end{proof}

\section{Conclusion}\label{sec:conclusion}

Our results show that the fluted fragment supports an equirank homomorphism
preservation theorem, whereas the fragment-internal \L o\'s--Tarski theorem
and both Beth definability properties fail.  For homomorphisms,
existential-positive characteristic sentences unfold into finite canonical
forests, and the reset lemma yields an equirank equivalent.  The
counterexamples expose two different limitations: a fluted simulation need
not preserve the fact that two continuations end in the same row-label class,
and the truth of a level-two formula over a unary signature depends only on
its second coordinate.  All results persist over finite structures.  The
arguments are equality-free; adding equality invalidates the reset and
last-coordinate lemmas and lies outside the present scope.

\section*{Acknowledgements}
This work was supported by the China Scholarship Council (grant no.~202206220057). The author thanks Hongkai Yin for useful discussions on the fluted fragment.

\bibliographystyle{alphaurl}
\bibliography{references}
\end{document}